\numberwithin{equation}{section}
\numberwithin{figure}{section}
\newtheorem{theorem}{Theorem}[section]
\newtheorem{lemma}[theorem]{Lemma}
\newtheorem{proposition}[theorem]{Proposition}
\newtheorem{fact}[theorem]{Fact}
\newtheorem{example}[theorem]{Example}
\theoremstyle{definition}
\newtheorem{definition}[theorem]{Definition}
\theoremstyle{remark}
\newtheorem{remark}[theorem]{Remark}
\theoremstyle{conjecture}
\newtheorem{problem}[theorem]{Problem}
\DeclareMathAlphabet{\mathpzc}{OT1}{pzc}{m}{it}
\newcommand{\Atbb}[4]
{{\widetilde{\mathbb{A}}}_{{#1},{#2},{#3}}^{{#4}}}
\newcommand{\Attbb}[4]
{\tilde{\tilde{{\mathbb{A}}}}_{{#1},{#2},{#3}}^{{#4}}}
\newcommand{\dual}[2]{\widehat{#1}_{\operatorname{#2}}}
\newcommand{\Exterior}{\mathchoice{{\textstyle\bigwedge}}%
    {{\bigwedge}}%
    {{\textstyle\wedge}}%
    {{\scriptstyle\wedge}}} 
\newcommand{\invHom}[3]{\operatorname{Hom}_{#1}({#2},{#3})}
\begin{document}
\title
{Conformal symmetry breaking 
 on differential forms and some applications}
\author{Toshiyuki Kobayashi 
\\
 Graduate School of Mathematical Sciences
 and Kavli IPMU
\\
 The University of Tokyo}
\date{}
\maketitle%

\abstract
{Rapid progress has been made recently
 on symmetry breaking operators
 for real reductive groups.  
Based on Program A--C for branching problems
 (T.~Kobayashi [Progr.~Math.~2015]),
 we illustrate a scheme 
 of the classification
 of (local and nonlocal) symmetry breaking operators
 by an example of conformal representations 
 on differential forms
 on the model space
 $(X,Y)=(S^n, S^{n-1})$, 
 which generalizes the scalar case
 (Kobayashi--Speh [Mem.~Amer.~Math.~Soc. 2015])
 and the case of local operators
 (Kobayashi--Kubo--Pevzner [Lect.~Notes Math. 2016]).  
Some applications to automorphic form theory, 
 motivations from conformal geometry, 
 and the methods of proof are also discussed.  
}
\vskip 1pc
\par\noindent
{\bf{Key words and phrases}}:\enspace
{branching rule, 
 conformal geometry, 
 reductive group, 
 symmetry breaking}

\newcommand{\isawatyp}
{
\left(
\begin{matrix} i \\ \lambda,\delta \end{matrix}
\,\left|\,
\begin{matrix} j \\ \nu, \varepsilon \end{matrix}
\right)
\right.
}

\newcommand{\isawavar}[6]
{
\left(
\begin{matrix} {#1} \\ {#2},{#3} \end{matrix}
\,\left|\,
\begin{matrix} {#4} \\ {#5}, {#6} \end{matrix}
\right)
\right.
}

\section{Branching problems---Stages A to C}
\label{sec:1}
Suppose $\Pi$ is an irreducible representation
 of a group $G$.  
We may regard $\Pi$ as a representation of its subgroup $G'$
 by restriction,
 which we denote by $\Pi|_{G'}$.  
The restriction $\Pi|_{G'}$ is not irreducible in general.  
In case it can be given as the direct sum
 of irreducible $G'$-modules, 
 the decomposition is called the {\it{branching law}}
 of the restriction $\Pi|_{G'}$.  

\begin{example}
[fusion rule]
Let $\pi_1$ and $\pi_2$ be representations of a group $H$.  
The outer tensor product $\Pi:=\pi_1 \boxtimes \pi_2$
 is a representation of the product group $G:= H \times H$, 
 and its restriction $\Pi|_{G'}$
 to the subgroup
 $G':={\operatorname{diag}}(H)$ is nothing
 but the tensor product representation
 $\pi_1 \otimes \pi_2$.  
In this case, 
 the branching law is called the {\it{fusion rule}}.  
\end{example}

For real reductive Lie groups
 such as $G=GL(n,{\mathbb{R}})$ or $O(p,q)$, 
 irreducible representations $\Pi$ are usually infinite-dimensional 
 and do not always possess highest weight vectors, 
 consequently,
 the restriction $\Pi|_{G'}$ to subgroups $G'$
may involve various (sometimes \lq\lq{wild}\rq\rq) aspects:
\begin{example}
\label{ex:fusion}
The fusion rule of two irreducible unitary principal series representations
 of $GL(n,{\mathbb{R}})$ $(n \ge 3)$ involve continuous spectrum
 and infinite multiplicities in the direct integral
 of irreducible unitary representations.  
\end{example}

By the {\it{branching problem}} 
 (in a wider sense than the usual), 
 we mean the problem of understanding 
 how the restriction $\Pi|_{G'}$ behaves 
 as a representation of the subgroup $G'$.  
We treat non-unitary representations $\Pi$ as well.  
In this case,  
 instead of considering the irreducible decomposition of the restriction 
 $\Pi|_{G'}$, 
we may investigate continuous $G'$-homomorphisms
\[
   T \colon \Pi|_{G'} \to \pi
\]
to irreducible representations $\pi$ of the subgroup $G'$.  
We call $T$ a {\it{symmetry breaking operator}}
 (SBO, for short).  
The dimension of the space of symmetry breaking operators
\[
   m(\Pi,\pi):= \dim_{\mathbb{C}}\invHom {G'} {\Pi|_{G'}}\pi
\]
 may be thought of
 as a variant of the \lq\lq{multiplicity}\rq\rq.  
Finding a formula of $m(\Pi,\pi)$ is a substitute
 of the branching law $\Pi|_{G'}$
 when $\Pi$ is not a unitary representation.

The author proposed in \cite{xkvogan} a program
 for branching problems
 in the following three stages:
\vskip 0.8pc
\par\noindent
{\bf{Stage A.}}\enspace
Abstract feature of the restriction $\Pi|_{G'}$.  
\par\noindent
{\bf{Stage B.}}\enspace
Branching laws.  
\par\noindent
{\bf{Stage C.}}\enspace
Construction of symmetry breaking operators.  
\vskip 0.8pc

Loosely speaking,
 Stage B concerns a decomposition
 of representations,
 whereas Stage C asks for a decomposition of vectors.

For \lq\lq{abstract features}\rq\rq\ of the restriction in Stage A,
 we may think of the following aspects:

\vskip 0.5pc
{\bf{A.1.}}\enspace
Spectrum of the restriction $\Pi|_{G'}$:
\begin{enumerate}
\item[$\bullet$]
(discretely decomposable case, \cite{xkInvent94, xkAnn98, xkInvent98})\enspace
branching problems could be studied purely
 algebraic and combinatorial approaches;
\item[$\bullet$]
(continuous spectrum)\enspace
branching problems may be of analytic feature
({\it{e.g.}}, Example \ref{ex:fusion}).  
\end{enumerate}
\vskip 0.5pc
{\bf{A.2.}}\enspace
Estimate of multiplicities for the restriction $\Pi|_{G'}$:
\begin{enumerate}
\item[$\bullet$]
multiplicities may be infinite 
(see Example \ref{ex:fusion});
\item[$\bullet$]
multiplicities may be at most one in special settings
 ({\it{e.g.}}, theta correspondence \cite{xhowe}, 
 Gross--Prasad conjecture \cite{GP}, 
 real forms of strong Gelfand pairs \cite{SunZhu}, 
 visible actions \cite{xkProgrMath13}, 
 {\it{etc.}}).  
\end{enumerate}
The goal of Stage A in branching problems 
 is to analyze aspects
 such as A.1 and A.2
 in complete generality.  
If multiplicities of the restriction $\Pi|_{G'}$
 are known {\it{a priori}}
 to be bounded in Stage A, 
 one might be tempted to find irreducible decompositions
 (Stage B), 
 and moreover to construct explicit symmetry breaking operators (Stage C).  
Thus, 
 results in Stage A might also serve as
 a foundation for further detailed study
 of the restriction $\Pi|_{G'}$
 (Stages B and C).

\vskip 0.8pc
This article is divided into three parts.  
First, 
 we discuss Stage A in Section \ref{sec:mult}
 with focus on multiplicities
 in both regular representations 
 on homogeneous spaces
 and branching problems based on a joint work
 \cite{xKOfm} with T.~Oshima, 
 and give some perspectives
 of the subject
 through the classification theory
 \cite{xKMt} joint with T.~Matsuki
 about the pairs $(G,G')$ 
 for which multiplicities in branching laws
 are always finite.

Second,
 we take $(G,G')$ to be $(O(n+1,1),O(n,1))$
 as an example of such pairs, 
 and explain the first test case
 for the classification problem of symmetry breaking operators
 (Stages B and C).  
The choice of our setting is motivated
 by conformal geometry,
 and is also related to the local Gross--Prasad
 conjecture \cite{GP, sbonGP}.  
We survey the classification theory of conformally covariant SBO
 for differential forms
 on the model space 
 $(X,Y)=(S^n,S^{n-1})$:
 for local operators
 based on a recent book \cite{KKP}
 with T.~Kubo and M.~Pevzner 
 in Section \ref{sec:DSBO}
 and for nonlocal operators 
 based on a recent monograph \cite{sbon} with B.~Speh
 and its generalization \cite{sbonvec}
 in Section \ref{sec:CSBO}.

In Section \ref{sec:period},
 we discuss an ongoing work with Speh on some applications
 of these results to a question from automorphic form theory, 
 in particular, 
 about the periods of irreducible representations
 with nonzero $({\mathfrak{g}},K)$-cohomologies.  
The resulting condition to admit periods
is compared with a recent $L^2$-theory
 \cite{BK} 
 joint with Y.~Benoist.

Detailed proofs of the new results
 in Sections \ref{sec:CSBO} and \ref{sec:period}
 will be given in separate papers \cite{xkresidue, sbonvec}.  

\vskip 1pc
{\bf{Notation.}}\enspace
${\mathbb{N}}=\{0,1,2,\cdots\}$.  

\section{Preliminaries: smooth representations}
We would like to treat non-unitary representations as well
 for the study of branching problems.  
For this we recall 
 some standard concepts
 of continuous representations of Lie groups.

Suppose $\Pi$ is a continuous representation 
 of $G$ on a Banach space $V$.  
A vector $v \in V$ is said
 to be {\it{smooth}}
 if the map
 $G \to V$, 
 $g \mapsto \Pi(g)v$ is of $C^{\infty}$-class.  
Let $V^{\infty}$ denote
 the space
 of smooth vectors
 of the representation $(\Pi,V)$.  
Then $V^{\infty}$ is a $G$-invariant dense subspace
 of $V$, 
 and $V^{\infty}$ carries 
 a Fr{\'e}chet topology
 with a family of semi-norms
$\|v\|_{i_1\cdots i_k}:=\|d\Pi(X_{i_1}) \cdots d\Pi(X_{i_k})v\|$, 
 where $\{X_1, \dots, X_n\}$ is a basis
 of the Lie algebra ${\mathfrak {g}}_0$ of $G$.  
Thus we obtain a continuous Fr{\'e}chet representation 
 $(\Pi^{\infty}, V^{\infty})$
 of $G$.

Suppose now that $G$ is a real reductive linear Lie group, 
 $K$ a maximal compact subgroup of $G$,
 and ${\mathfrak {g}}$ the complexification
 of the Lie algebra ${\mathfrak {g}}_0$ of $G$.  
Let ${\mathcal{HC}}$ denote the category
 of Harish-Chandra modules
 whose objects and morphisms are $({\mathfrak {g}}, K)$-modules
 of finite length
 and $({\mathfrak {g}}, K)$-homomorphisms, 
 respectively.  
Let $\Pi$ be a continuous representation 
 of $G$ on a complete locally convex topological vector space $V$.  
Assume that the $G$-module $\Pi$ is of finite length.  
We say $\Pi$ is {\it{admissible}}
 if 
\[
   \dim_{\mathbb{C}} \operatorname{Hom}_K(\tau, \Pi|_K)< \infty
\]
 for all irreducible finite-dimensional representations $\tau$ of $K$.  
We denote by $V_K$
 the space of $K$-finite vectors.  
Then $V_K \subset V^{\infty}$
 and the Lie algebra ${\mathfrak {g}}$
 leaves $V_K$ invariant.  
The resulting $({\mathfrak {g}}, K)$-module
 on $V_K$
 is called the underlying $({\mathfrak {g}}, K)$-module
 of $\Pi$, 
 and will be denoted by $\Pi_K$.

For any admissible representation $\Pi$
 on a Banach space $V$, 
 the smooth representation $(\Pi^{\infty}, V^{\infty})$
 depends only on the underlying 
 $({\mathfrak {g}}, K)$-module.  
We say $(\Pi^{\infty}, V^{\infty})$
 is an {\it{admissible smooth representation}}.  
By the Casselman--Wallach globalization theory, 
 $(\Pi^{\infty}, V^{\infty})$ has moderate growth, 
 and there is a canonical equivalence of categories
 between the category ${\mathcal{HC}}$
 of Harish-Chandra modules
 and the category of admissible smooth representations 
 of $G$ (\cite[Chap.~11]{W}).  
In particular,
 the Fr{\'e}chet representation
 $\Pi^{\infty}$ is uniquely 
 determined by its underlying
 $({\mathfrak {g}}, K)$-module.  
We say $\Pi^{\infty}$
 is the {\it{smooth globalization}}
 of $\Pi_K \in {\mathcal{HC}}$.

For simplicity,
 by an {\it{irreducible smooth representation}}, 
 we shall mean an irreducible admissible smooth representation
 of $G$.  
We denote by $\dual {G}{smooth}$
 the set of equivalence classes
 of irreducible smooth representations 
 of $G$.  
Via the underlying $({\mathfrak {g}}, K)$-modules, 
 we may regard the unitary dual $\dual G{}$ 
as a subset of $\dual G {smooth}$.

\section{Multiplicities in symmetry breaking}
\label{sec:mult}
Let $G \supset G'$ be a pair of real reductive groups.  
For $\Pi \in \dual G {smooth}$ and $\pi \in \dual {G'} {smooth}$, 
 we denote by $\invHom {G'} {\Pi|_{G'}}{\pi}$ 
 the space of symmetry breaking operators,
 and define the {\it{multiplicity}}
 (for smooth representation) by 
\begin{equation}
\label{eqn:mult}
   m(\Pi,\pi):= \dim_{\mathbb{C}}\invHom {G'} {\Pi|_{G'}}{\pi}
   \in {\mathbb{N}} \cup \{\infty\}.  
\end{equation}
Note that $m(\Pi,\pi)$ is well-defined 
 without the unitarity assumption on $\Pi$ and $\pi$.

We established a geometric criterion for multiplicities
 to be finite
 (more strongly, to be bounded) as follows:
\begin{theorem}
[\cite{xKOfm},  see also \cite{Ksuron, xkProg2014}]
\label{thm:PPBB}
Let $G \supset G'$ be a pair of real reductive algebraic Lie groups.  
\begin{enumerate}
\item[{\rm{(1)}}]
The following two conditions
 on the pair $(G,G')$ are equivalent:
\begin{enumerate}
\item[{\rm{(FM)}}]
{\rm{(finite multiplicities)}}\enspace
$m(\Pi, \pi)<\infty$ for all $\Pi \in \widehat G_{\operatorname{smooth}}$
 and $\pi \in \widehat {G'}_{\operatorname{smooth}}$;
\item[{\rm{(PP)}}]
{\rm{(geometry)}}
$(G \times G')/{\operatorname{diag}}(G')$ is real spherical.  
\end{enumerate}
\item[{\rm{(2)}}]
The following two conditions on the pair $(G,G')$ are equivalent:
\begin{enumerate}
\item[{\rm{(BM)}}]
{\rm{(bounded multiplicities)}}\enspace
There exists $C>0$ such that 
\[
   m(\Pi, \pi) \le C
\quad
\text{for all $\Pi \in \widehat G_{\operatorname{smooth}}$
 and $\pi \in \widehat {G'}_{\operatorname{smooth}}$;}
\]
\item[{\rm{(BB)}}]
{\rm{(complex geometry)}}
$(G_{\mathbb{C}} \times G_{\mathbb{C}}')/{\operatorname{diag}}(G_{\mathbb{C}}')$ is spherical.  
\end{enumerate}
\end{enumerate}
\end{theorem}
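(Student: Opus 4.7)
The natural strategy is to reinterpret the space of symmetry breaking operators as a multiplicity space for the regular representation on the homogeneous space
\[
  X := (G \times G')/{\operatorname{diag}}(G'),
\]
and then to invoke a general geometric criterion for finite (resp.\ bounded) multiplicities of regular representations on reductive homogeneous spaces. Set $\widetilde{G} := G \times G'$, which is again a real reductive algebraic group.

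\emph{Step 1 (reciprocity).} For $\Pi \in \dual{G}{smooth}$ and $\pi \in \dual{G'}{smooth}$, the outer tensor product $\Pi \boxtimes \pi^{\vee}$ lies in $\dual{\widetilde{G}}{smooth}$. Frobenius reciprocity in the smooth Fr\'echet category (valid thanks to the Casselman--Wallach moderate growth globalization, which in particular ensures that smooth duals of admissible smooth representations are again admissible smooth) identifies
\[
  \invHom{G'}{\Pi|_{G'}}{\pi} \;\simeq\; \invHom{\widetilde{G}}{\Pi \boxtimes \pi^{\vee}}{C^{\infty}(X)}.
\]
As $(\Pi,\pi)$ varies, $\Pi \boxtimes \pi^{\vee}$ exhausts $\dual{\widetilde{G}}{smooth}$, since $\widetilde{G}$ is a direct product of reductive groups. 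Consequently (FM) (resp.\ (BM)) is equivalent to the statement that every irreducible smooth $\widetilde{G}$-representation occurs in $C^{\infty}(X)$ with finite (resp.\ uniformly bounded) multiplicity.

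\emph{Step 2 (geometric criterion for regular representations).} The plan is then to apply the general principle due to \cite{xKOfm}: for any reductive homogeneous space $\widetilde{G}/\widetilde{H}$, all irreducible smooth $\widetilde{G}$-representations appear in $C^{\infty}(\widetilde{G}/\widetilde{H})$ with finite multiplicity iff $\widetilde{G}/\widetilde{H}$ is real spherical, and uniformly bounded multiplicity iff its complexification is $\widetilde{G}_{\mathbb{C}}$-spherical. Specializing to $\widetilde{H} = {\operatorname{diag}}(G')$ gives $X$ and its complexification $(G_{\mathbb{C}} \times G_{\mathbb{C}}')/{\operatorname{diag}}(G_{\mathbb{C}}')$, yielding exactly the conditions (PP) and (BB).

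\emph{Main obstacle.} Step~1 is essentially formal once the Casselman--Wallach globalization is in place. The substance is Step~2. For the implications (PP) $\Rightarrow$ (FM) and (BB) $\Rightarrow$ (BM), one realizes every intertwiner as a generalized-function section on a real (resp.\ complex) flag variety; the existence of an open orbit of a minimal parabolic (resp.\ Borel) subgroup forces the associated boundary-value system to be holonomic of (uniformly) bounded rank, bounding the intertwiner space. For the converse implications, when sphericity fails one must exhibit irreducible smooth representations whose multiplicities can be made arbitrarily large, typically by continuously deforming the parameters of a degenerate principal series family and exploiting the positive-dimensional generic parabolic orbit to produce independent intertwiners. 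Establishing the uniform analytic bound in the first direction and ensuring that the deformed intertwiners remain linearly independent and non-trivial in the converse are the most delicate technical steps.
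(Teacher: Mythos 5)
Your proposal follows the paper's own route exactly: reduce the symmetry-breaking multiplicity to the multiplicity of $\Pi\boxtimes\pi^{\vee}$ in $C^{\infty}\bigl((G\times G')/\operatorname{diag}(G')\bigr)$, and then invoke the Kobayashi--Oshima criterion (Theorem~\ref{fact:HP}, from \cite{xKOfm}) equating finite/bounded multiplicity on a homogeneous space with real sphericity/sphericity of its complexification. One caution: you dismiss Step~1 as ``essentially formal,'' but the paper explicitly flags it as requiring ``careful arguments on topological vector spaces'' (\cite[Thm.~4.1]{xkProg2014}), the point being that the identification of $\operatorname{Hom}_{G'}(\Pi^{\infty}|_{G'},\pi^{\infty})$ with $\operatorname{Hom}_{G\times G'}\bigl(\Pi^{\infty}\mathbin{\widehat{\otimes}}(\pi^{\vee})^{\infty}, C^{\infty}(X)\bigr)$ involves the completed projective tensor product of nuclear Fr\'echet spaces, duality for non-Banach spaces, and automatic-continuity/closed-graph arguments in the Casselman--Wallach category rather than a purely formal Frobenius manipulation.
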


Here we recall that a connected complex manifold $X_{\mathbb{C}}$ 
 with holomorphic action of a complex reductive group $G_{\mathbb{C}}$
 is called {\it{spherical}} 
 if a Borel subgroup of $G_{\mathbb{C}}$
 has an open orbit in $X_{\mathbb{C}}$.  
There has been an extensive study
 of spherical varieties in algebraic geometry
 and finite-dimensional representation theory.  
In constant, 
 concerning the real setting, 
 in search of a good framework for global analysis 
 on homogeneous spaces
 which are broader than the usual 
 ({\it{e.g.}}, reductive symmetric spaces),
 the author proposed:
\begin{definition}
[{\cite{Ksuron}}]
\label{def:realsp}
\rm{
Let $G$ be a real reductive Lie group.  
We say a connected smooth manifold $X$ with smooth $G$-action
 is {\it{real spherical}}
 if a minimal parabolic subgroup $P$
 of $G$ has an open orbit in $X$.  
}
\end{definition}
We discovered in \cite{Ksuron, xKOfm}
 that these geometric properties
 (spherical/real spherical) are exactly the conditions
 that a reductive group $G$ has a \lq\lq{strong grip}\rq\rq\
 of the space of functions on $X$
 in the context of multiplicities of (infinite-dimensional)
 irreducible representations
 occurring in the regular representation
 of $G$ on $C^{\infty}(X)$:
\begin{theorem}
[{\cite[Thms.~A and C]{xKOfm}}]
\label{fact:HP}
Suppose $G$ is a real reductive linear Lie group,
 $H$ is an algebraic reductive subgroup, 
 and $X=G/H$. 
\begin{enumerate}
\item[{\rm{(1)}}] 
The homogeneous space $X$ 
 is real spherical
 if and only if 
\[
\text{
$\dim_{\mathbb{C}} \invHom{G}{\pi}{C^{\infty}(X)}< \infty$
 for all $\pi \in \dual G {smooth}$.  
}
\]
\item[{\rm{(2)}}]
The complexification $X_{\mathbb{C}}$
 is spherical 
 if and only if 
\[
\sup_{\pi \in \dual G {smooth}}
 \dim_{\mathbb{C}} \invHom G{\pi}{C^{\infty}(X)}
< \infty.  
\]
\end{enumerate}
\end{theorem}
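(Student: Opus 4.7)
The plan is to convert the intertwining space into a geometric object on the real flag variety and then read off its size from an orbit analysis. The first step is smooth Frobenius reciprocity: since $C^{\infty}(X) = \operatorname{Ind}_{H}^{G}(\mathbf{1})$, a continuous $G$-intertwiner $\pi^{\infty}\to C^{\infty}(G/H)$ is determined by the continuous $H$-invariant linear form $v \mapsto (Tv)(eH)$ on $\pi^{\infty}$, giving the identification
\[
\invHom{G}{\pi}{C^{\infty}(X)} \;\simeq\; ((\pi^{\vee})^{-\infty})^{H},
\]
where $(\pi^{\vee})^{-\infty}$ is the space of distribution vectors of the contragredient. Second, by the Casselman subrepresentation theorem, I would embed $\pi^{\vee}$ into a smooth principal series $\operatorname{Ind}_{P_{\min}}^{G}(\sigma)$, which realizes $((\pi^{\vee})^{-\infty})^{H}$ as a subspace of the space $\mathcal{D}'(G/P_{\min};\mathcal{L}_{\sigma})^{H}$ of $H$-invariant $\sigma$-twisted distributions on the real flag variety. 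In this way both parts of the theorem reduce to controlling $\dim \mathcal{D}'(G/P_{\min};\mathcal{L}_{\sigma})^{H}$ as $\sigma$ varies.

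For the forward direction of (1), the real sphericity of $G/H$ is by definition the existence of an open $H$-orbit on $G/P_{\min}$, and via a Matsuki-type argument one upgrades this to the finiteness of the set of $H$-orbits on $G/P_{\min}$. Stratifying $G/P_{\min}$ by these finitely many orbits and filtering distributions by transversal normal order, I would show that on each stratum the space of $H$-invariant distributions is governed by a finite-dimensional piece of the isotropy representation of $\sigma$ at a base point; summing over the finitely many strata yields $\dim \mathcal{D}'(G/P_{\min};\mathcal{L}_{\sigma})^{H} < \infty$ for every $\sigma$. For (2) the stronger hypothesis gives an analogous Brion--Vinberg finiteness on the complex flag variety $G_{\mathbb{C}}/B$, after which the $H$-invariant hyperfunction vectors become solutions of a regular holonomic $\mathcal{D}$-module whose characteristic variety lies in the union of conormal bundles to the finitely many $H_{\mathbb{C}}$-orbits; the rank of such a system, and hence a fortiori the real-analytic solution dimension, is bounded by a quantity depending only on the orbit geometry, which gives the uniform bound in $\sigma$.

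For the two converse implications one assumes that the relevant sphericity fails and must manufacture arbitrarily large spaces of invariants. The geometric input is that failure of real sphericity means that the generic $H$-orbit on $G/P_{\min}$ has positive codimension and moves in a continuous moduli, so integrating delta-type invariant distributions against a transversal slice produces, for generic $\sigma$, infinite-dimensional $H$-invariants in $(\operatorname{Ind}_{P_{\min}}^{G}(\sigma))^{-\infty}$; the complex case is analogous but requires a parameter family of irreducible $\pi$'s with growing multiplicities. The main obstacle in the whole scheme is precisely this converse. The production of invariant distributions is geometrically routine, but one must promote them to honest elements of $((\pi^{\vee})^{-\infty})^{H}$ for an \emph{irreducible} smooth $\pi$. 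This requires a careful genericity/irreducibility argument on the inducing parameter $\sigma$ so that $\operatorname{Ind}_{P_{\min}}^{G}(\sigma)$ is irreducible and the constructed invariants do not collapse when passing to an irreducible subquotient; this is the delicate part and is where most of the technical work in \cite{xKOfm} is concentrated.
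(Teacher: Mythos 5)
Your route is legitimate but genuinely different from the one the paper sketches. You reduce, via smooth Frobenius reciprocity and the Casselman subrepresentation theorem, to bounding $H$-invariant distribution sections of line bundles over the real flag manifold $G/P_{\min}$, and then appeal to the $H$-orbit stratification there. By contrast, the paper's \textbf{Methods of proof} paragraph describes the Kobayashi--Oshima argument as localizing on $X=G/H$ itself: for the upper estimate one studies the system of PDEs cut out by the $Z(\mathfrak g)$-action on smooth sections over $X$, passes to a compactification of $X$ with normal crossing boundary (whose strata are governed by the finitely many $P_{\min}$-orbits on $X$), and invokes regular-singularity theory to control the solution space; for the lower estimate one builds a \lq\lq generalized Poisson transform\rq\rq\ $\operatorname{Ind}_{P_{\min}}^{G}(\sigma)\to C^\infty(X)$ out of the orbit data. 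Loosely: your argument works on $G/P_{\min}$ with $H$ acting, theirs on $G/H$ with $P_{\min}$ acting. Their route is what delivers the quantitative estimates and the bundle-valued extension (with $H$ not assumed reductive) that the survey emphasizes; yours meshes more directly with the distribution-kernel formalism for symmetry breaking operators used later in the paper and is closer in spirit to the Kr\"otz--Schlichtkrull line of work.

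Two caveats about the sketch as written. In the forward direction, the step \lq\lq finitely many $H$-orbits on $G/P_{\min}$ implies $\dim\mathcal{D}'(G/P_{\min};\mathcal{L}_\sigma)^{H}<\infty$\rq\rq\ is not a formality: an $H$-invariant distribution supported in a non-open orbit closure can {\it a priori} have arbitrary transversal jet order, and one must show that for each fixed $\sigma$ the isotropy weight on the conormal kills all but finitely many jet orders. That is where the analytic content of the upper bound lives, and your outline asserts the conclusion rather than proving it. (Likewise, real sphericity gives an open $H$-orbit, but the passage to \emph{finitely many} $H$-orbits is itself a theorem, not a definition.) For the converse, you rightly flag the difficulty, but the paper's mechanism is the concrete Poisson-transform construction, whereas your sketch gestures at a moduli of orbits without explaining how the resulting invariants land in an irreducible $\pi$ and stay independent.
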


\par\noindent
{\bf{Methods of proof.}}\enspace
In \cite{xKOfm}, 
 we obtained not only the equivalences in Theorem \ref{fact:HP}
 but also quantitative estimates
 of the dimension.  
The proof for the upper estimate in \cite{xKOfm} uses the theory 
 of regular singularities
 of a system of partial differential equations
 by taking an appropriate compactification
 with normal crossing boundaries, 
 whereas the proof for the lower estimate uses 
 the construction of a \lq\lq{generalized Poisson transform}\rq\rq.  
Furthermore,
 these estimates hold 
 for the representations of $G$
 on the space of smooth sections 
 for equivariant vector bundles
 over $X=G/H$ without assuming 
 that $H$ is reductive.  
For instance,
 this applies also to the case
 where $H$ is a maximal unipotent subgroup of $G$, 
giving a Kostant--Lynch estimate the dimension of the space
 of Whittaker vectors
 (\cite[Ex.~1.4 (3)]{xKOfm}).

Back to Theorem \ref{thm:PPBB} on branching problems, 
 the geometric estimates of multiplicities
 is proved by applying Theorem \ref{fact:HP}
 to the pair $(G \times G', {\operatorname{diag}}(G'))$
 together with some careful arguments
 on topological vector spaces
 (\cite[Thm.~4.1]{xkProg2014}).  

\vskip 0.8pc
\par\noindent
{\bf{Classification theory.}}\enspace
Theorem \ref{thm:PPBB} serves Stage A 
 in branching problems,
 and singles out nice settings in which we could expect to 
 go further on Stages B and C
 of the detailed study of symmetry breaking.

So it would be useful
 to develop a classification theory of pairs $(G,G')$
 for which the geometric criteria (PP) or (BB) in Theorem \ref{thm:PPBB}
 are satisfied.  
\begin{enumerate}
\item[$\bullet$]
The geometric criterion (BB) in Theorem \ref{thm:PPBB}
 appeared in the context
 of {\it{finite-dimensional}} representations
 already in 1970s, 
 and such pairs $(G_{\mathbb{C}},G_{\mathbb{C}}')$
 were classified infinitesimally,
 see \cite{Kr1}.  
The classification of real forms $(G,G')$
 satisfying the condition (BB)
 follows readily from 
 that of complex pairs $(G_{\mathbb{C}},G_{\mathbb{C}}')$, 
 see \cite{xKMt}.  
Sun--Zhu \cite{SunZhu} proved
 that the constant $C$ in Theorem \ref{thm:PPBB} can be taken
 to be one
 ({\it{multiplicity-free theorem}})
 in many of real forms $(G,G')$,
 see \cite[Rem.~2.2]{sbonGP} for multiplicity-two results 
 for some other real forms.  
\item[$\bullet$]
The pairs $({}^{\backprime} G \times {}^{\backprime} G,
 {\operatorname{diag}}({}^{\backprime} G))$
 for real reductive groups ${}^{\backprime} G$ satisfying the geometric criterion (PP)
 in Theorem \ref{thm:PPBB} were classified in \cite{Ksuron}.  
\item[$\bullet$]
More generally,
 symmetric pairs $(G,G')$ 
 satisfying the geometric criterion (PP)
 in Theorem \ref{thm:PPBB}
 was classified by the author
 and Matsuki \cite{xKMt}.  
The methods are a linearization technique
 and invariants of quivers.  
\end{enumerate}
In turn,
 these classification results give an {\it{a priori}} estimate
 of multiplicities in branching problems
 by Theorem \ref{thm:PPBB}.  
\begin{example}
[finite multiplicities for the fusion rule, {\cite[Ex.~2.8.6]{Ksuron}}, 
 see also {\cite[Cor.~4.2]{xkProg2014}}]
Suppose $G$ is a simple Lie group.  
Then the following two conditions are equivalent:
\begin{enumerate}
\item[{\rm{(i)}}]
$\dim_{\mathbb{C}}\invHom {G}{\pi_1 \otimes \pi_2}{\pi_3}<\infty$
 for all $\pi_1$, $\pi_2$, $\pi_3 \in \dual G{smooth}$;
\item[{\rm{(ii)}}]
$G$ is either compact or locally isomorphic to $SO(n,1)$.  
\end{enumerate}
\end{example}

\begin{example}
\label{ex:Opqr}
Let $(G,G')=(O(p+r,q), O(r)\times O(p,q))$.  
\begin{enumerate}
\item[{\rm{(1)}}]
$m(\Pi,\pi)<\infty$
for all $\Pi \in \dual G{smooth}$
 and $\pi \in \dual{G'}{smooth}$.  
\item[{\rm{(2)}}]
$m(\Pi,\pi) \le 1$
for all $\Pi \in \dual G{smooth}$
 and $\pi \in \dual{G'}{smooth}$
 iff $p+q+r \le 4$ or $r=1$.  
\end{enumerate}
\end{example}

See \cite{xkProg2014} for the further classification theory 
 of symmetric pairs $(G,G')$
 that guarantee finite multiplicity properties
 for symmetry breaking.  

\section{Conformally covariant SBOs}
\label{sec:CCSBO}
This section discusses a question
 on symmetry breaking with respect to a pair of conformal manifolds 
 $X \supset Y$.

Let $(X,g)$ be a Riemannian manifold. Suppose that a Lie group $G$ acts conformally on $X$. This means that there exists a positive-valued function $\Omega\in C^\infty(G\times X)$ (\index{B}{conformal factor}\emph{conformal factor}) such that
\index{A}{11zOmega@$\Omega(h,x)$, conformal factor|textbf}
\begin{equation*}
L_h^*g_{h\cdot x}=\Omega(h,x)^2g_x\quad \text{for all $h \in G$, $x \in X$},
\end{equation*}
where 
we write $L_h\colon X\to X, x\mapsto h\cdot x$ for the action of $G$ on $X$. 
When $X$ is oriented, we define a locally constant function
\index{A}{or@$\mathpzc{or}$|textbf}
\[
\mathpzc{or}\colon G\times X \longrightarrow \{\pm1\}
\] 
by $\mathpzc{or}(h)(x)=1$ if $(L_h)_{*x}\colon T_xX \longrightarrow T_{L_hx}X$ 
is orientation-preserving, and $=-1$ if it is orientation-reversing.

Since both the conformal factor $\Omega$
 and the orientation map $\mathpzc{or}$ satisfy
 cocycle conditions,
 we can form a family of representations 
$\varpi^{(i)}_{\lambda,\delta}$ of $G$ with parameters 
$\lambda \in{\mathbb{C}}$ and $\delta\in{\mathbb{Z}}/2{\mathbb{Z}}$
 on the space 
\index{A}{E10@$\mathcal E^i(X)$}
$\mathcal E^i(X)$ of differential $i$-forms on $X$ $(0\leq i \leq \dim X)$ 
 defined by
\index{A}{1pi@$\varpi^{(i)}_{u,\delta}$, conformal representation on $i$-forms|textbf}
\begin{equation}
\label{eqn:varpi}
\varpi^{(i)}_{\lambda,\delta}(h)\alpha:=\mathpzc{or}(h)^\delta\Omega(h^{-1},\cdot)^{\lambda}
L_{h^{-1}}^*\alpha,\quad (h\in G).
\end{equation}
The representation $\varpi^{(i)}_{\lambda,\delta}$ of the conformal group $G$ on $\mathcal E^i(X)$
will be simply denoted by
\index{A}{E1i@$\mathcal E^i(X)_{\lambda,\delta}$, 
conformal representation on $i$-forms on $X$|textbf}
$\mathcal E^i(X)_{\lambda,\delta}$,
 and referred to as the 
\index{B}{conformal representation on $i$-forms|textbf}
{\it{conformal representation}}
 on differential $i$-forms.

Suppose that $Y$ is an orientable submanifold.  
Then $Y$ is endowed with a Riemannian structure $g\vert_{Y}$ by restriction, 
and we can define in a similar way a family of representations
 $\mathcal E^j(Y)_{\nu, \varepsilon}$
 ($\nu\in{\mathbb{C}},\varepsilon\in{\mathbb{Z}}/2{\mathbb{Z}},
 0\leq j\leq\dim Y$)
 of the conformal group of $(Y,g{\vert}_{ Y})$.

We consider the full group of conformal diffeomorphisms
 and its subgroup defined as 
\index{A}{C1X@$\mathrm{Conf}(X)$|textbf}
\index{A}{C1XY@$\mathrm{Conf}(X;Y)$|textbf}
\begin{align}
\mathrm{Conf}(X)&:=\{\text{conformal diffeomorphisms of $(X, g)$}\},
\notag
\\
\mathrm{Conf}(X;Y)&:=\{
\varphi \in \mathrm{Conf}(X): \varphi(Y)=Y\}.
\label{eqn:confXY}
\end{align} 
Then there is a natural group homomorphism
\begin{equation}
\label{eqn:confXYY}
  \mathrm{Conf}(X;Y) \to \mathrm{Conf}(Y), 
 \quad
  \varphi \mapsto \varphi|_Y.  
\end{equation}

\begin{definition}
A linear map $T \colon {\mathcal{E}}^i(X)_{\lambda,\delta}
 \to {\mathcal{E}}^j(Y)_{\nu,\varepsilon}$
 is a {\it{conformally covariant symmetry breaking operator}}
 (conformally covariant SBO, for short)
 if $T$ intertwines the actions
 of the group ${\operatorname{Conf}}(X;Y)$.  
\end{definition}
We shall write 
\begin{alignat}{2}
&H \isawatyp &&:= {\operatorname{Hom}}_{{\operatorname{Conf}}(X;Y)}
             ({\mathcal{E}}^i(X)_{\lambda,\delta}|_{{\operatorname{Conf}}(X;Y)}, 
              {\mathcal{E}}^j(Y)_{\nu,\varepsilon})
\label{eqn:H}
\\
&\qquad\quad\cup &&
\notag
\\
&D \isawatyp &&:= {\operatorname{Diff}}_{{\operatorname{Conf}}(X;Y)}
             ({\mathcal{E}}^i(X)_{\lambda,\delta}|_{{\operatorname{Conf}}(X;Y)}, 
              {\mathcal{E}}^j(Y)_{\nu,\varepsilon})
\label{eqn:D}
\end{alignat}
for the space of continuous conformally covariant
 SBOs 
and its subspace of differential SBOs, 
 namely,
 those operators $T$ satisfying the local property:
 ${\operatorname{Supp}}(T\alpha) \subset {\operatorname{Supp}}(\alpha)$ 
 for all $\alpha \in {\mathcal{E}}^i(X)_{\lambda,\delta}$.  
This support condition is a generalization
 of Peetre's characterization \cite{Pe}
 of differential operators in the $X=Y$ case
 (\cite[Def.~2.1]{KP1}, for instance).

\vskip 0.8pc
We address a general problem motivated by 
 conformal geometry:
\begin{problem}
[conformally covariant symmetry breaking operators]
\label{prob:conf}
Let $X \supset Y$ are orientable Riemannian manifolds.  
\begin{enumerate}
\item[{\rm{(1)}}]
Determine when $H \isawatyp \ne \{0\}$.  
\item[{\rm{(2)}}]
Determine when $D \isawatyp \ne \{0\}$.  
\item[{\rm{(3)}}]
Construct an explicit basis
 of $H \isawatyp$ and $D \isawatyp$.  
\end{enumerate}
\end{problem}
Problem \ref{prob:conf} (1) and (2)
 may be thought of as Stage B
 of branching problems in Section \ref{sec:1}, 
 while Problem \ref{prob:conf} (3) as Stage C.

In the case where $X=Y$ and $i = j = 0$,
a classical prototype of such operators is 
a second order differential operator called the \index{B}{Yamabe operator}
Yamabe operator
\begin{equation*}
\Delta+\frac{n-2}{4(n-1)}\kappa\in \mathrm{Diff}_{\mathrm{Conf}(X)}(\mathcal E^0(X)_{\frac n2-1,\delta},\mathcal E^0(X)_{\frac n2+1,\delta}),
\end{equation*}
where $n$ is the dimension of the manifold $X$, 
$\Delta$ is the Laplacian,
 and $\kappa$ is the
scalar curvature, 
 see \cite[Thm.~A]{KO1}, for instance.
Conformally covariant differential operators of higher order are also known:
the Paneitz operator (fourth order) \cite{P08}, 
or more generally, the so-called 
\index{B}{GJMS operator}
GJMS operators \cite{GJMS} are such operators.
Turning to operators acting on  differential forms, 
 we observe that the exterior derivative $d$,
 the codifferential $d^*$,
 and the Hodge $\ast$ operator
 are also examples of conformally covariant operators on differential forms,
 namely,
 $j=i+1$, $i-1$, and $n-i$, respectively, 
 with an appropriate choice
 of the parameter $(\lambda,\nu,\delta, \varepsilon)$.
As is well-known, 
 Maxwell's equations in four-dimension
 can be expressed in terms of conformally 
 covariant operators on differential forms.  

Let us consider the general case where $X\neq Y$. 
{}From the viewpoint of conformal geometry,
 we are interested in \lq\lq{natural operators}\rq\rq\ $T$
 that persist for all pairs of Riemannian manifolds
 $X \supset Y$
 of fixed dimension.  
We note that Problem \ref{prob:conf} is trivial 
 for 
individual pairs $X \supset Y$ such that $\mathrm{Conf}(X;Y) = \{e\}$,
because any linear operator becomes automatically an SBO.  
In contrast, the larger ${\operatorname{Conf}}(X;Y)$ is, 
 the more constraints on $T$ will be imposed.  
Thus we highlight the case of large conformal groups as the first step to attack Problem \ref{prob:conf}.

In general, 
 the conformal group cannot be so large.  
We recall from \cite[Thms.~6.1 and 6.2]{K95} the upper estimate of the dimension of the conformal group:
\begin{fact}
\label{fact:dimConf}
Let $X$ be an $n$-dimensional compact Riemannian manifold of dimension 
 $n \ge 3$.  
Then 
$
 \dim {\operatorname{Conf}}(X) \le \frac 1 2 (n+1)(n+2).  
$
The equality holds
 if and only if $({\operatorname{Conf}}(X),X)$ is locally isomorphic to $(O(n+1,1), S^n)$.  
\end{fact}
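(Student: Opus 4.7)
The plan is to combine the finite-dimensionality of $\operatorname{Conf}(X)$ with a 2-jet upper bound on the isotropy subgroup at a point, and then to identify the equality case with the conformally flat model. First, I invoke the classical theorem of Obata and Lelong--Ferrand that for $n \geq 3$ and $X$ compact, $\operatorname{Conf}(X)$ is a finite-dimensional Lie group acting smoothly on $X$. Fix $x_0 \in X$, let $K := \operatorname{Conf}(X)_{x_0}$ be the isotropy subgroup, and apply the orbit--stabilizer principle:
\[
\dim \operatorname{Conf}(X) = \dim\bigl(\operatorname{Conf}(X)\cdot x_0\bigr) + \dim K \leq n + \dim K.
\]

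Next, I would bound $\dim K$ by passing to 2-jets at $x_0$. In conformal normal coordinates centred at $x_0$, any $\varphi \in K$ whose 2-jet at $x_0$ is trivial must be the identity; equivalently, a conformal Killing field vanishing to second order at $x_0$ is identically zero. The admissible 1-jet of $\varphi \in K$ lies in the conformal linear group $\operatorname{CO}(n) = \mathbb{R}_{>0}\times O(n)$, of dimension $\binom{n}{2} + 1$. Once the 1-jet is fixed, the conformal Killing equation (equivalently, the standard prolongation of the Cartan connection) leaves an $n$-dimensional residual freedom in the 2-jet, corresponding to the \emph{special conformal} (inversion) directions and naturally identified with $(T_{x_0} X)^*$. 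Thus
\[
\dim K \leq \binom{n}{2} + 1 + n,
\]
and combining with the orbit estimate yields $\dim \operatorname{Conf}(X) \leq n + \binom{n}{2} + n + 1 = \tfrac{1}{2}(n+1)(n+2)$.

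For the equality case, each of the intermediate inequalities must saturate. Saturation of the orbit bound forces $\operatorname{Conf}(X)$ to act transitively on $X$, while saturation of the stabilizer bound forces every admissible first- and second-order jet at $x_0$ to be realised by some $\varphi \in K$. The availability of all special-conformal second-order jets at each point is precisely the vanishing of the conformal curvature (the Weyl tensor for $n \geq 4$, the Cotton--York tensor for $n = 3$), since these tensors are exactly the obstructions to integrating an infinitesimal inversion at a point to a local conformal vector field. Hence $(X,g)$ is locally conformally flat, and together with transitivity the developing map identifies $(\operatorname{Conf}(X), X)$ locally with the flat model $(O(n+1,1), S^n)$.

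The hard part is this second-order realisability step in the equality case. The cleanest route is via Cartan's equivalence method, viewing a conformal structure in dimension $\geq 3$ as a normal Cartan geometry modelled on $(O(n+1,1), P)$: general principles then give automatically that the automorphism group has dimension at most $\dim O(n+1,1) = \tfrac{1}{2}(n+1)(n+2)$, with equality if and only if the Cartan curvature vanishes. A proof avoiding this machinery must instead prolong the conformal Killing equation explicitly and extract the conformal curvature tensor as the order-two obstruction, which is a delicate calculation but is the real content of the equality statement.
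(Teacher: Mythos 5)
The paper does not prove this statement at all; it explicitly \emph{recalls} it from S.~Kobayashi's book \cite[Thms.~6.1 and 6.2]{K95}, so there is no in-paper proof to compare against. Your sketch reproduces the classical argument that underlies those theorems, and it is essentially correct: the orbit--stabilizer bound $\dim\operatorname{Conf}(X)\leq n+\dim K$, the fact that a conformal vector field is determined by its 2-jet at a point (closing of the prolongation of the conformal Killing equation), and the jet count $\dim K\leq\bigl(\binom{n}{2}+1\bigr)+n$ giving $\tfrac12(n+1)(n+2)$ in total, followed by the Cartan-geometry interpretation of the equality case (vanishing of the conformal curvature as the obstruction to realising every second-order jet, hence conformal flatness, hence identification with the flat model).

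One minor inaccuracy worth fixing: you attribute the finite-dimensionality of $\operatorname{Conf}(X)$ to Obata and Lelong--Ferrand. Those theorems concern the rigidity of \emph{compact} manifolds with non-compact (essential) conformal group. The finite-dimensionality and the dimension bound you want here hold for any $n\geq 3$, compact or not, and follow precisely from the fact you use next --- that the conformal Killing system is of finite type (equivalently, from the existence of a normal Cartan connection of type $(O(n+1,1),P)$). It is cleaner to present the 2-jet determination as the source of finite-dimensionality rather than citing Obata/Lelong--Ferrand for it. Also be explicit that the arithmetic $\binom{n}{2}+1$ is $\dim\operatorname{CO}(n)$ and that the residual $n$ parameters in the 2-jet come from the prolonged variable $\nabla\sigma$ where $\sigma=\tfrac1n\operatorname{div}X$; your total $n+\binom{n}{2}+1+n=\tfrac12(n+1)(n+2)$ then matches the standard count of $(X,\omega,\sigma,\nabla\sigma)$ for the full conformal Killing prolongation. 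With that caveat, your route is the standard one and supports the Fact.
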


Concerning a pair $(X,Y)$ of Riemannian manifolds, 
 we obtain the following.
\begin{proposition}
\label{prop:manXY}
Let $X \supset Y$ be Riemannian manifolds of dimension $n$ and $m$, 
 respectively.  
Then $\dim {\operatorname{Conf}}(X;Y) \le \frac 1 2 (m+1)(m+2)$.  
The equality holds
 if $X=S^n$ and $Y$ is a totally geodesic submanifold
 which is isomorphic to $S^m$.  
\end{proposition}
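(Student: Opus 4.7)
\medskip
\noindent
\textbf{Proof proposal.}\enspace
The plan is to pass from $X$ to its submanifold $Y$ via the restriction homomorphism $\rho \colon \mathrm{Conf}(X;Y) \to \mathrm{Conf}(Y)$ of \eqref{eqn:confXYY}. Applied to the $m$-dimensional Riemannian manifold $(Y, g|_Y)$, Fact \ref{fact:dimConf} gives $\dim \mathrm{Conf}(Y) \le \frac{1}{2}(m+1)(m+2)$, and the upper bound reduces to showing that $\ker \rho$ is zero-dimensional, so that $\dim \mathrm{Conf}(X;Y) = \dim \rho(\mathrm{Conf}(X;Y)) \le \dim \mathrm{Conf}(Y)$.

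To analyze $\ker \rho$ infinitesimally, I would take a conformal Killing vector field $V$ on $X$ with $V|_Y \equiv 0$ and study its one-jet along $Y$. At a point $p \in Y$ the endomorphism $A := (\nabla V)_p$ of $T_pX$ annihilates $T_pY$; the conformal Killing equation then forces the symmetric part of $A$ to be $\sigma \cdot \mathrm{id}$ for some $\sigma \in \mathbb{R}$, and testing on tangent vectors of $Y$ gives $\sigma = 0$. Hence $A$ is skew-symmetric, and a short calculation shows it carries $N_pY$ into itself. In the codimension-one situation the normal space is one-dimensional and $A$ must vanish; combined with $V_p = 0$ and the one-jet rigidity of conformal Killing fields (the conformal analogue of the classical Killing-field rigidity, via prolongation of the equation to a finite-type system), this forces $V \equiv 0$.

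For the equality case $(X,Y) = (S^n, S^m)$ with $S^m$ a great sub-sphere, the inclusion $\mathbb{R}^{m+1,1} \hookrightarrow \mathbb{R}^{n+1,1}$ by zero-extension induces $O(m+1,1) \hookrightarrow O(n+1,1)$, embedding the full M\"obius group of $S^m$ inside $\mathrm{Conf}(S^n)$ as a subgroup preserving $S^m$; its dimension is $\frac{1}{2}(m+1)(m+2)$, saturating the upper bound.

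The main obstacle is the kernel analysis: while the pointwise linear algebra at $p \in Y$ is straightforward, propagating the vanishing of the one-jet of $V$ at $p$ to $V \equiv 0$ globally demands the full rigidity statement for conformal Killing fields, and the codimension hypothesis enters essentially --- in higher codimension the skew-symmetric normal endomorphism contributes nontrivially, consistent with the natural $O(n-m)$-factor one sees in the extrinsic symmetries of a totally geodesic $S^m \subset S^n$.
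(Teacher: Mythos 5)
Your approach matches the paper's in outline --- apply Fact~\ref{fact:dimConf} to $Y$ through the restriction homomorphism \eqref{eqn:confXYY} for the upper bound, and embed $O(m+1,1)$ into $O(n+1,1)$ for equality --- but you go further and actually confront the kernel of $\rho$, which the paper's one-line proof tacitly assumes to be discrete. This extra care is not cosmetic: your own closing remark about the extrinsic $O(n-m)$ factor shows that the statement, as written for arbitrary codimension, in fact \emph{fails} when $n-m\ge 2$. For $(X,Y)=(S^5,S^3)$ the stabilizer $\mathrm{Conf}(X;Y)$ is locally $O(4,1)\times O(2)$, of dimension $10+1=11>\tfrac12(m+1)(m+2)=10$, and $\ker\rho$ is locally the positive-dimensional $O(2)$ acting trivially on $Y$; the paper's assertion that $\mathrm{Conf}(S^n;S^m)$ is locally isomorphic to $O(m+1,1)$ alone drops this factor. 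The Proposition is therefore only correct (and is only used downstream, from Section~\ref{sec:CCSBO} onward) in the codimension-one case $m=n-1$, and that is exactly the case your kernel analysis is designed to close, since there $O(n-m)=O(1)$ is finite.

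One correction to the kernel argument itself: conformal Killing fields are not determined by their one-jet, so the appeal to ``one-jet rigidity'' is too strong as stated. The conformal Killing equation prolongs to a closed first-order (finite-type) system of rank $\binom{n+2}{2}$ whose fibre carries $(V,\nabla V,\sigma,\nabla\sigma)$ with $\sigma=\tfrac1n\operatorname{div}V$; vanishing of $V_p$, of $(\nabla V)_p$, and hence of $\sigma(p)$ does not force $V\equiv 0$ without also $(\nabla\sigma)_p=0$. You do have the leverage to get this extra vanishing because $V$ vanishes identically on the hypersurface $Y$, not merely at one point: all tangential derivatives of the prolonged data vanish along $Y$, and the closed prolonged system then annihilates the remaining normal derivative of $\sigma$. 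But that step should be spelled out; the Killing-field analogue you cite is the wrong rigidity statement for the conformal case.
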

\begin{proof}
The first inequality follows from Fact \ref{fact:dimConf}
 via the group homomorphism \eqref{eqn:confXYY}.  
If $(X,Y)=(S^n,S^m)$, 
 then 
${\operatorname{Conf}}(X)$ and ${\operatorname{Conf}}(X;Y)$
 are locally isomorphic to $O(n+1,1)$ and $O(m+1,1)$, 
 respectively,
 whence the second assertion. 
\end{proof}
{}From now on,
 we shall consider the pair
\begin{equation}
\label{eqn:XYsph}
  (X,Y)=(S^n, S^{n-1}), 
\end{equation}
 as a model case with largest symmetries, 
 where $Y=S^{n-1}$ is embedded as a totally geodesic submanifold of $X=S^n$.  
As mentioned, 
 the pair $({\operatorname{Conf}}(X), {\operatorname{Conf}}(X;Y))$ is 
 locally isomorphic to the pair 
\begin{equation}
\label{eqn:Lorentz}
  (G,G')=(O(n+1,1), O(n,1)).  
\end{equation}
We remind that this pair appeared in Section \ref{sec:mult}
 on branching problems, 
 see the case 
 where $r=1$
 in Example \ref{ex:Opqr}.  
As an {\it{a priori}} estimate in Stage A, 
 see Theorem \ref{thm:PPBB} (2), 
 Example \ref{ex:Opqr}, 
 \cite[Thm.~2.6]{KKP}, 
 and \cite{SunZhu}, 
 we have
\begin{equation}
\label{eqn:multfour}
 \dim_{\mathbb{C}} H\isawatyp
  \le 4 
\quad
\text{for any $(i,j,\lambda,\nu,\delta,\varepsilon)$.  }
\end{equation}
In turn,
 the estimate \eqref{eqn:multfour} gives an upper bound
 for the dimension of the space of \lq\lq{natural}\rq\rq\ conformal 
 covariant SBOs, 
 ${\mathcal{E}}^i(X)_{\lambda,\delta} \to {\mathcal{E}}^j(Y)_{\nu,\varepsilon}$
 that persist for all pairs $X \supset Y$
 of codimension one.  
In the next two sections,
 we explain briefly a solution to Problem \ref{prob:conf}
 (Stages B and C)
 in the model case \eqref{eqn:XYsph}.

\section{Classification theory of conformally covariant differential SBOs}
\label{sec:DSBO}
In the case where symmetry breaking operators
 are given as differential operators,
 Problem \ref{prob:conf} 
 in the model space \eqref{eqn:XYsph}
 was solved 
 in a joint work \cite{KKP}
 with Kubo and Pevzner.  
In this section,
 we introduce its flavors briefly.  
First of all, 
 the solution to Problem \ref{prob:conf} (2), 
 a question in Stage B
 of branching problems, 
 may be stated as follows.  

\begin{theorem}
\label{thm:Dnonzero}
Suppose $n \ge 3$, 
 $0 \le i \le n$, 
 $0 \le j \le n-1$,  
 $\lambda, \nu \in {\mathbb{C}}$, 
 and $\delta, \varepsilon \in \{\pm\}$.  
Then the following three conditions on 6-tuple
 $(i,j,\lambda,\nu,\delta,\varepsilon)$ are equivalent:
\begin{enumerate}
\item[{\rm{(i)}}]
$D \isawatyp \ne \{0\}$.  
\item[{\rm{(ii)}}]
$\dim_{\mathbb{C}}D \isawatyp =1$.  
\item[{\rm{(iii)}}]
The parameter $(i,j,\lambda,\nu,\delta,\varepsilon)$ satisfies 
\begin{align}
\label{eqn:ijcond}
&\{j,n-j-1\} \cap \{i-2, i-2,i,i+1\} \ne \emptyset,
\\
&\nu-\lambda \in {\mathbb{N}}, 
\notag
\\
 &\text{a certain condition $Q\equiv Q_{i,j}$
 on $(\lambda,\nu,\delta,\varepsilon)$.  }
\label{eqn:addQ}
\end{align}
\end{enumerate}
\end{theorem}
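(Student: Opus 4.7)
My strategy is to apply the \emph{F-method} of Kobayashi--Pevzner, which converts the analytic classification of differential SBOs into a purely algebraic problem about polynomial solutions of an explicit system of ODEs. I begin by realising $(X,Y)=(S^n,S^{n-1})$ as a pair of real flag varieties: with $G=O(n+1,1)$ and $G'=O(n,1)$, write $S^n=G/P$ and $S^{n-1}=G'/P'$, where $P=MAN_+$ and $P'=M'A'N'_+$ are minimal parabolics with $M=O(n)\times O(1)$ and $M'=O(n-1)\times O(1)$. Under this realisation, the conformal representation ${\mathcal E}^i(X)_{\lambda,\delta}$ identifies with the smooth (degenerate) principal series $\operatorname{Ind}_P^G(\Exterior^i({\mathbb C}^n)\boxtimes \chi_{\lambda,\delta})$, and likewise for ${\mathcal E}^j(Y)_{\nu,\varepsilon}$.

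Next, by the duality theorem for differential symmetry breaking operators (\cite[Thm.~2.9]{KP1}), there is a canonical isomorphism
\[
D\isawatyp \;\simeq\; \operatorname{Hom}_{\mathfrak{g}',P'}\!\bigl(\operatorname{ind}_{\mathfrak{p}'}^{\mathfrak{g}'}(W^{\vee}_{j,\nu,\varepsilon}),\,\operatorname{ind}_{\mathfrak{p}}^{\mathfrak{g}}(V^{\vee}_{i,\lambda,\delta})|_{\mathfrak{g}'}\bigr),
\]
where $V=\Exterior^i({\mathbb C}^n)$ and $W=\Exterior^j({\mathbb C}^{n-1})$. The algebraic Fourier transform on the opposite nilradical $\nbar$ then identifies the right-hand side with the space of $M'A'$-equivariant $\operatorname{Hom}(V,W)$-valued polynomial solutions on $\nbar$ of a holonomic system of PDEs, built from the action of the centraliser of $\mathfrak{p}'$ in $U(\mathfrak{g})$. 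The problem thus becomes: classify polynomial solutions of this explicit F-system.

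The third step is to exploit the branching rule for the finite-dimensional $M$-module $V$ restricted to $M'$. Because $\Exterior^i({\mathbb C}^n)|_{O(n-1)}\simeq \Exterior^i({\mathbb C}^{n-1})\oplus \Exterior^{i-1}({\mathbb C}^{n-1})$, and combined with the Hodge $\ast$-isomorphism $\Exterior^j({\mathbb C}^{n-1})\simeq\Exterior^{n-1-j}({\mathbb C}^{n-1})$, any nonzero homomorphism forces $\{j,n-1-j\}\cap\{i-2,i-1,i,i+1\}\ne\emptyset$: this yields the index condition \eqref{eqn:ijcond}. Within each admissible $(i,j)$-class, the $M'A'$-equivariance cuts the holonomic system down to a single ODE of Gegenbauer/Jacobi type in one radial variable. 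Its polynomial solvability immediately requires $\nu-\lambda\in{\mathbb N}$, while the non-vanishing of the resulting Gegenbauer-type polynomial (and compatibility with the parity characters $\delta,\varepsilon$) gives the additional condition $Q_{i,j}$ of \eqref{eqn:addQ}. Conversely, when all three conditions hold, an explicit Gegenbauer-type polynomial produces a nonzero element of $D\isawatyp$, and the one-dimensionality of the polynomial solution space of the ODE yields $\dim D\isawatyp=1$, proving the cycle (i)$\Rightarrow$(iii)$\Rightarrow$(ii)$\Rightarrow$(i).

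The main obstacle is the careful identification of the condition $Q_{i,j}$ in step three. The four possibilities for $\{j,n-1-j\}\cap\{i-2,i-1,i,i+1\}$ each give a different F-system and thus a different family of operators (generalisations of Juhl's operators, the exterior derivative $d$, codifferential $d^{*}$, Branson--Gover-type operators, and their Hodge-$\ast$ conjugates), and the parity/factorisation conditions that ensure the candidate Gegenbauer polynomial is nonzero differ among these cases. Moreover the Hodge $\ast$ operator creates nontrivial identifications between distinct $(i,j,\lambda,\nu,\delta,\varepsilon)$-data that must be tracked consistently to avoid double-counting; this bookkeeping, rather than the F-method reduction itself, is the delicate part.
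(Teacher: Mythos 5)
Your overall strategy --- realizing the conformal representations as degenerate principal series, invoking the duality theorem for differential SBOs to pass to Verma-module homomorphisms, applying the algebraic Fourier transform to land in a space of polynomial solutions of the F-system, and reducing to Gegenbauer-type ODEs --- is precisely the approach in \cite{KKP} that the paper cites; the paper itself offers no independent proof, only the citation and a one-paragraph description of the method.

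There is, however, a genuine gap in your derivation of the index condition \eqref{eqn:ijcond}. You argue that the branching rule $\Exterior^i({\mathbb C}^n)\big|_{O(n-1)}\simeq\Exterior^i({\mathbb C}^{n-1})\oplus\Exterior^{i-1}({\mathbb C}^{n-1})$, combined with the Hodge $\ast$-isomorphism, forces $\{j,n{-}1{-}j\}\cap\{i-2,i-1,i,i+1\}\ne\emptyset$. But that branching rule, together with $\ast$, only yields $\{j,n{-}1{-}j\}\cap\{i-1,i\}\ne\emptyset$: it produces the $M'$-types $\Exterior^{i}$ and $\Exterior^{i-1}$ and nothing else. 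The additional shifts $j=i+1$ and $j=i-2$ do not come from $\operatorname{Hom}_{M'}(V,W)$ alone; they come from the fact that the F-method requires a nonzero element of $\operatorname{Hom}_{M'A'}\!\bigl(V,\, W\otimes\operatorname{Pol}(\nbar)\bigr)$, and the polynomial factor $\operatorname{Pol}(\nbar)$ carries nontrivial $O(n-1)$-types (e.g.\ $\nbar^*\supset{\mathbb C}^{n-1}$), whose tensor product with $\Exterior^j({\mathbb C}^{n-1})$ produces $\Exterior^{j\pm1}$. Only after combining this with the branching of $V$ does one obtain the four-element set $\{i-2,i-1,i,i+1\}$ on the $V$-side (and then one still must invoke the F-system PDEs to exclude the larger shifts that higher-degree spherical harmonics would otherwise allow). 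Concretely: the operator $\operatorname{Rest}\circ d$ realizing the case $j=i+1$ in Theorem \ref{thm:SBOji1}(1) is not visible from the restriction branching of $\Exterior^i({\mathbb C}^n)$ alone. As written, your step-three argument proves a necessary condition that is strictly stronger than \eqref{eqn:ijcond}, and thus would wrongly exclude the cases $j=i+1$ and $j=i-2$ (and their Hodge duals). The fix is to carry out the $M'A'$-type analysis on the full space $V^\vee\otimes W\otimes\operatorname{Pol}(\nbar)$ of F-method symbols, not on $\operatorname{Hom}(V,W)$ alone.

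A minor remark: the set $\{i-2,i-2,i,i+1\}$ in the statement is a typo in the paper for $\{i-2,i-1,i,i+1\}$; you correctly use the intended set.
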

The first condition \eqref{eqn:ijcond} concerns the degrees
 $i$ and $j$
 of differential forms.  
Loosely speaking,
 conformally covariant differential SBOs exist
 only if the degrees $i$ and $j$ are close 
 to each other or the sum $i+j$ is close to $n$.  
The last \lq\lq{additional}\rq\rq\ condition $Q_{i,j}$ depends on $(i,j)$.  
We give the condition $Q_{i,j}$ explicitly 
 in the following two cases:
\begin{enumerate}
\item[$\bullet$] Case $j=i$.  
$Q_{i,i}$ amounts to $\nu \in {\mathbb{C}}$ and $\delta \equiv \varepsilon \equiv \nu-\lambda \mod 2$.  
\item[$\bullet$] Case $j=i+1$.  
For $1 \le i \le n-2$, 
$Q_{i,i+1}$ amounts to $(\lambda,\nu)=(0,0)$
 and $\delta \equiv \varepsilon \equiv 0 \mod 2$;
for $i=0$, 
 $Q_{0,1}$ amounts to $\lambda \in -{\mathbb{N}}$, 
 $\nu=0$, and $\delta \equiv \varepsilon \equiv \lambda \mod 2$.  
\end{enumerate}
See \cite[Thm.~1.1]{KKP} for the precise conditions
 in the other remaining six cases.

Second,
 we go on with Problem \ref{prob:conf} (3) (Stage C)
 about the construction
 of symmetry breaking operators.  
For this we work with the pair $({\mathbb{R}}^n, {\mathbb{R}}^{n-1})$
 of the flat Riemannian manifolds 
 which are conformal to $(S^n \setminus \{\operatorname{pt}\}, S^{n-1} \setminus \{\operatorname{pt}\})$
 via the stereographic projection.

We begin with a {\it{scalar-valued}} operator
 (Juhl's operator, \cite{Juhl}).  
Suppose that our hyperplane $Y={\mathbb{R}}^{n-1}$ of $X={\mathbb{R}}^{n}$
 is defined by $x_n=0$ in the coordinates $(x_1, \cdots, x_n)$.  
For $\mu \in {\mathbb{C}}$ and $k \in {\mathbb{N}}$, 
 we define a homogeneous differential operator
 of order $k$ by 
\[
  {\mathcal{D}}_k^{\mu} :=
  \sum_{0 \le i \le [\frac k 2]} a_i(\mu)
  (-\Delta_{{\mathbb{R}}^{n-1}})^i
  \frac{\partial^{k-2i}}{\partial x_n^{k-2i}}
  \colon C^{\infty}({\mathbb{R}}^n) \to C^{\infty}({\mathbb{R}}^n), 
\]
 where $\{a_i(\mu)\}$ are the coefficients
 of the Gegenbauer polynomial:
\[
   C_k^{\mu}(t) = \sum_{0 \le i \le [\frac k 2]} a_i(\mu) t^{k-2i}.  
\]
Building on the scalar-valued operators,
 we introduced in \cite{KKP}
 {\it{matrix-valued}} differential symmetry breaking operators
\[
   {\mathcal{D}}_{\lambda,k}^{i \to j}
   \colon
  {\mathcal{E}}^i({\mathbb{R}}^{n}) \to {\mathcal{E}}^j({\mathbb{R}}^{n-1})
\]
for each pair $(i,j)$ satisfying \eqref{eqn:ijcond}.  
We illustrate a concrete formula when $j=i$. 
We set 
\[
   {\mathcal{D}}_{\lambda,k}^{i \to i}
   :=
   {\operatorname{Rest}}_{x_n=0}
   \circ
   ({\mathcal{D}}_{k-2}^{\mu+1} d d^{\ast}
    + a {\mathcal{D}}_{k-1}^{\mu} d \iota_{\frac{\partial}{\partial x_n}}
   + b {\mathcal{D}}_k^{\mu}), 
\]
 where $d^{\ast}$ is the codifferential, 
$
     \iota_{\frac{\partial}{\partial x_n}} 
     \colon 
    {\mathcal{E}}^i({\mathbb{R}}^{n}) 
    \to 
    {\mathcal{E}}^j({\mathbb{R}}^{n-1})
$
 is the inner multiplication
 of the vector field $\frac{\partial}{\partial x_n}$,
 and 
\[
  a:=
  \begin{cases}
  1 
&\text{($k$: odd)}
\\
\lambda+i-\frac n 2+k
&\text{($k$: even)}
  \end{cases}, 
\quad
b:=\frac{\lambda+k}{2},
\quad
\mu:=\lambda+i-\frac{n-1}{2}.  
\]
Thus the operator ${\mathcal{D}}_{\lambda,k}^{i \to i}$ is obtained
 as the composition of a $\invHom {\mathbb{C}}{\Exterior^i({\mathbb{C}}^n)}{\Exterior^i({\mathbb{C}}^n)}$-valued homogeneous 
 differential operator on ${\mathbb{R}}^n$
 of order $k$
 with the restriction map to the hyperplane ${\mathbb{R}}^{n-1}$.

The matrix-valued differential operators
 ${\mathcal{D}}_{\lambda,k}^{i \to j} \colon   {\mathcal{E}}^i({\mathbb{R}}^{n}) \to {\mathcal{E}}^j({\mathbb{R}}^{n-1})$
 were defined in \cite[Chap.~1]{KKP} also 
 for the other seven cases 
 when the condition (iii) in Theorem \ref{thm:Dnonzero} is fulfilled.  

\vskip 0.8pc
{\bf{Methods of proof}}
 in finding the formul{\ae}
 for ${\mathcal{D}}_{\lambda,k}^{i \to j}$.  
The approach in \cite{KKP} is based 
 on the {\it{F-method}} \cite{xkHelg85}, 
 which reduces a problem of finding the operators
 ${\mathcal{D}}_{\lambda,k}^{i \to j}$
 to another problem of finding polynomial solutions
 to a system of ordinary differential equations
 ({\it{F-system}}).  
An alternative approach for $j=i-1,i$ is given
 in \cite{xkresidue}
 by taking the residues of the regular symmetry breaking operators
 (see also Section \ref{sec:CSBO} below).

\vskip 0.8pc
With the aforementioned operators
 ${\mathcal{D}}_{\lambda,k}^{i \to j}$, 
 Problem \ref{prob:conf} (3)
 for differential operators were solved in \cite[Thms.~1.4--1.8]{KKP}, 
 which may be thought of as an answer to Stage C
 of branching problems.  
We illustrate the results
 with the following two theorems
 in the case
 where $j=i$ and $i+1$.  
\begin{theorem}
[$j=i$ case]
\label{thm:SBOji}
Suppose $\nu \in {\mathbb{C}}$, $k:=\nu-\lambda \in {\mathbb{N}}$, 
 and $\delta \equiv \varepsilon \equiv k \mod 2$.  
\begin{enumerate}
\item[{\rm{(1)}}]
The linear map ${\mathcal{D}}_{\lambda,k}^{i \to i}$ extends to a conformally covariant symmetry breaking operator from 
$
   {\mathcal{E}}^i(S^{n})_{\lambda,\delta}
$
to 
$
   {\mathcal{E}}^i(S^{n-1})_{\nu,\varepsilon}.
$

\item[{\rm{(2)}}]
Conversely,
 any conformally covariant differential symmetry breaking operator from 
 ${\mathcal{E}}^i(S^{n})_{\lambda,\delta}$ to 
 ${\mathcal{E}}^i(S^{n-1})_{\nu,\varepsilon}$
 is proportional to ${\mathcal{D}}_{\lambda,k}^{i \to i}$, 
 or its renormalization (\cite[(1.10)]{KKP}).  
\end{enumerate}
\end{theorem}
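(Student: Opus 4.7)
The plan is to establish part (1) by explicit verification of conformal covariance, and then deduce the uniqueness statement in part (2) from the one-dimensionality guaranteed by Theorem \ref{thm:Dnonzero}.

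For part (1), I would pass to the flat model: via the stereographic projection, $S^n$ minus a point becomes $\mathbb{R}^n$, and the submanifold $S^{n-1}$ minus a point becomes the hyperplane $\{x_n = 0\}$. The conformal representation $\mathcal{E}^i(S^n)_{\lambda,\delta}$ is then realized as a degenerate principal series representation of $G = O(n+1,1)$ induced from a maximal parabolic $P = MAN_+$, with $\mathbb{R}^n$ appearing as the open Bruhat cell; similarly $\mathcal{E}^i(S^{n-1})_{\nu,\varepsilon}$ is a principal series of $G' = O(n,1)$. Since $\mathcal{D}_{\lambda,k}^{i\to i}$ is a local operator with polynomial coefficients in the flat chart, it suffices to check its $G'$-equivariance on this open Bruhat cell, and a density argument then extends the statement to $S^n$.

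By inspection, $\mathcal{D}_{\lambda,k}^{i \to i}$ is manifestly equivariant under the Levi factor of $P' := G' \cap P$ and under tangential translations: homogeneity in the scaling parameter $\lambda$ handles the split torus $A'$, the Gegenbauer ansatz is $O(n-1)$-invariant in the tangential variables, and the formula is constant-coefficient in $(x_1,\ldots,x_{n-1})$. The only genuine constraint is covariance under the one-dimensional opposite nilpotent subgroup $\bar{N}'_+ \cap G'$. At the infinitesimal level this reduces to a single commutation identity
\begin{equation*}
\mathcal{D}_{\lambda,k}^{i\to i} \circ d\varpi_{\lambda,\delta}^{(i)}(X)
\;=\; d\varpi_{\nu,\varepsilon}^{(i)}(X)|_Y \circ \mathcal{D}_{\lambda,k}^{i\to i}
\end{equation*}
for a single generator $X \in \bar{\mathfrak{n}}' \cap \mathfrak{g}'$. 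Applying the F-method of \cite{xkHelg85}, I take the partial Fourier transform in the tangential variables and convert the differential operator into a matrix-valued polynomial symbol; the intertwining condition then becomes an algebraic system (the \emph{F-system}) on this symbol.

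The hard step will be solving this matrix F-system, which decouples into three interacting scalar pieces corresponding to the summands $\mathcal{D}_{k-2}^{\mu+1}\, d\,d^{\ast}$, $a\, \mathcal{D}_{k-1}^{\mu}\, d\,\iota_{\partial/\partial x_n}$, and $b\,\mathcal{D}_k^\mu$ in the definition of $\mathcal{D}_{\lambda,k}^{i\to i}$. Each scalar piece is governed by a Gegenbauer ODE, explaining the appearance of $C_k^\mu$ together with its shifted companions $C_{k-1}^\mu$ and $C_{k-2}^{\mu+1}$; but compatibility among the three pieces forces the precise combinatorial values of $a$ and $b$ stated in the theorem. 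I expect this compatibility check---using classical contiguous relations between Gegenbauer polynomials of shifted order and parameter---to be the most delicate point of the argument. A more economical alternative, as noted in the paper, is to realize $\mathcal{D}_{\lambda,k}^{i\to i}$ as a residue of a meromorphic family of regular symmetry breaking operators in the style of \cite{xkresidue}, which bypasses the direct F-system manipulation at the cost of setting up that meromorphic family first.

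Once $\mathcal{D}_{\lambda,k}^{i\to i}$ has been exhibited as a nonzero element of $D \isawavar{i}{\lambda}{\delta}{i}{\nu}{\varepsilon}$, part (2) follows at once: the standing hypothesis $k = \nu - \lambda \in \mathbb{N}$ with $\delta \equiv \varepsilon \equiv k \pmod 2$ is precisely condition (iii) of Theorem \ref{thm:Dnonzero} in the case $j=i$, so the equivalence with (ii) yields $\dim_{\mathbb{C}} D \isawavar{i}{\lambda}{\delta}{i}{\nu}{\varepsilon} = 1$. Any conformally covariant differential SBO is therefore a scalar multiple of $\mathcal{D}_{\lambda,k}^{i\to i}$, or of its renormalization at the isolated parameter values where the leading normalization vanishes.
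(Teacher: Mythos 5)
The paper itself contains no proof of Theorem~\ref{thm:SBOji}: it is a survey and defers both parts to \cite[Thms.~1.4--1.8]{KKP}, describing only in the ``Methods of proof'' paragraph that the argument runs through the F-method (algebraic Fourier transform of Verma modules, reducing the intertwining relation to a system of ODEs, the F-system), with an alternative route via residues of the regular SBO family from \cite{xkresidue}. Your sketch is consistent with that described strategy — flat model by stereographic projection, reduction to the open Bruhat cell, equivariance under $M'A'N'$ by inspection, partial Fourier transform turning the remaining constraint into a matrix F-system whose solution is the Gegenbauer-type symbol, and then uniqueness from the one-dimensionality in Theorem~\ref{thm:Dnonzero}. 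So the route matches what the paper points to.

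Two corrections worth recording. First, $\bar N'$ is not one-dimensional: for $G'=O(n,1)$ the opposite nilradical is abelian of dimension $n-1$. The correct statement is that, once $M'$-covariance is in hand, checking $\bar N'$-equivariance reduces to a single infinitesimal generator — the F-system is written for that one generator, not because $\bar N'$ itself has dimension one. Second, deducing part~(2) from Theorem~\ref{thm:Dnonzero} is formally permissible, but you should be aware it is not a shortcut: in \cite{KKP} the implication (iii) $\Rightarrow$ (ii) of Theorem~\ref{thm:Dnonzero} is established by solving exactly the same F-system and showing its solution space is one-dimensional. Thus parts (1) and (2) are two consequences of one computation rather than logically independent steps, which is also how \cite{KKP} organizes the proof; stating this avoids the appearance of circularity.
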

\begin{theorem}
[$j=i+1$ case]
\label{thm:SBOji1}
\begin{enumerate}
\item[{\rm{(1)}}]
Suppose $1 \le i \le n-2$, 
 $(\lambda,\nu)=(n-2i,n-2i+3)$, 
 and $\delta \equiv \varepsilon \equiv 1 \mod 2$.  
Then the linear map
\[
     {\operatorname{Rest}}\circ d \colon 
 {\mathcal{E}}^i(S^{n})_{\lambda,\delta} \to 
 {\mathcal{E}}^{i+1}(S^{n-1})_{\nu,\varepsilon}
\]
 is a conformally covariant SBO.  
Conversely, 
 a nonzero conformally covariant differential SBO from 
 ${\mathcal{E}}^i(S^{n})_{\lambda,\delta}$ to 
 ${\mathcal{E}}^{i+1}(S^{n-1})_{\nu,\varepsilon}$
 exists only for the above parameters,
 and such an operator 
 is proportional to ${\operatorname{Rest}}\circ d$.  
\item[{\rm{(2)}}]
Suppose $i=0$, $\lambda \in \{0,-1,-2,\cdots\}$,
 $\nu=0$, 
 and $\delta \equiv \varepsilon \equiv \lambda \mod 2$.  
Then the linear map
\[
   {\operatorname{Rest}}_{x_n=0}\circ {\mathcal{D}}_{-\lambda}^{\lambda-\frac{n-1}{2}} \circ d \colon 
 {\mathcal{E}}^0({\mathbb{R}}^{n}) \to 
 {\mathcal{E}}^{1}({\mathbb{R}}^{n-1})
\]
 extends to a conformally covariant SBO from ${\mathcal{E}}^0(S^{n})_{\lambda,\delta}$ to ${\mathcal{E}}^{1}(S^{n-1})_{0,\varepsilon}$.  
Conversely, 
 a nonzero conformally covariant differential SBO from ${\mathcal{E}}^0(S^{n})_{\lambda,\delta}$ to ${\mathcal{E}}^{1}(S^{n-1})_{\nu,\varepsilon}$
 exists only for the above parameters,
 and such an operator is proportional to the above operator.  
\end{enumerate}
\end{theorem}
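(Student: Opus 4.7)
The plan is to combine the abstract classification of Theorem \ref{thm:Dnonzero} with the explicit construction of the displayed operators. The uniqueness halves of (1) and (2) are handled uniformly: Theorem \ref{thm:Dnonzero} guarantees $\dim D\isawatyp \le 1$ and pins down the parameter condition \eqref{eqn:ijcond}--\eqref{eqn:addQ} under which nonvanishing is possible. For $j=i+1$ the explicit form of $Q_{i,i+1}$ listed after Theorem \ref{thm:Dnonzero} forces $\lambda$, $\nu$, $\delta$ and $\varepsilon$ to take exactly the values appearing in (1) (for $1\le i \le n-2$) and in (2) (for $i=0$), once one translates between the principal-series normalization used in Theorem \ref{thm:Dnonzero} and the geometric normalization \eqref{eqn:varpi} on $(S^n,S^{n-1})$. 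So it remains only to exhibit a nonzero SBO at each of these parameter sets.

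For Part (1), I would check by direct computation that $\mathrm{Rest}\circ d$ intertwines the $\mathrm{Conf}(S^n;S^{n-1})$-actions. Two elementary observations suffice. First, $d\colon \mathcal{E}^i(X)\to\mathcal{E}^{i+1}(X)$ commutes with pullbacks under diffeomorphisms, and its interaction with the conformal factor $\Omega$ from \eqref{eqn:varpi} shows that $d$ is conformally covariant precisely when the parameter $\lambda$ in the source is chosen so that the offending Leibniz term $d\Omega\wedge\alpha$ is killed. Second, restriction to the totally geodesic $Y\subset X$ commutes with the $\mathrm{Conf}(X;Y)$-action up to a correction of the orientation character $\mathpzc{or}$ by the sign of the normal derivative, which yields the parity $\delta\equiv\varepsilon\equiv 1\pmod 2$. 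Composing and matching weights gives precisely $(\lambda,\nu)=(n-2i,n-2i+3)$.

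For Part (2), an additional scalar Gegenbauer operator $\mathcal{D}_{-\lambda}^{\lambda-(n-1)/2}$ is inserted between $d$ and the restriction. Applied componentwise on $1$-forms, this operator is known to be conformally covariant with a weight shift equal to its order $k=-\lambda$; this is essentially the scalar case of the results used to build $\mathcal{D}_{\lambda,k}^{i\to i}$ in \cite{KKP} and goes back to Juhl. Chaining $d\colon \mathcal{E}^0\to\mathcal{E}^1$, then $\mathcal{D}_{-\lambda}^{\lambda-(n-1)/2}$ componentwise, then restriction, one checks that the resulting conformal weights match $\mathcal{E}^0(S^n)_{\lambda,\delta}\to\mathcal{E}^1(S^{n-1})_{0,\varepsilon}$ for $\lambda\in -\mathbb{N}$ and $\delta\equiv\varepsilon\equiv\lambda\pmod 2$.

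The main obstacle I expect is purely bookkeeping: tracking how $\Omega$ and $\mathpzc{or}$ transform under $d$, scalar Gegenbauer operators, and restriction, and reconciling the normalization of $Q_{i,i+1}$ in Theorem \ref{thm:Dnonzero} with the intrinsic conformal weights carried by $\mathcal{E}^i(S^n)_{\lambda,\delta}$ and $\mathcal{E}^j(S^{n-1})_{\nu,\varepsilon}$. The cleaner route, and the one actually taken in \cite{KKP}, is the F-method: in the $N$-picture of the induced representation, conformal covariance becomes a finite-dimensional algebraic condition on the symbol of the SBO, so one verifies both existence and the unique characterization at the stated parameters by a short polynomial computation on the F-system solutions, bypassing the direct manipulation of $\Omega$ altogether.
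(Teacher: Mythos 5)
The paper does not itself prove Theorem \ref{thm:SBOji1}: it is a survey restating \cite[Thms.~1.4--1.8]{KKP}, and the ``Methods of proof'' paragraph preceding the two illustrative theorems states explicitly that the construction and classification are obtained via the F-method (an algebraic Fourier transform of Verma modules reducing the covariance condition to a polynomial ODE system), with an alternative residue route available only for $j=i-1,i$, not for $j=i+1$. You correctly identify the F-method as what the reference actually uses, and your framing of the uniqueness half via Theorem \ref{thm:Dnonzero} together with the explicit $Q_{i,j}$ is the intended route.

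Your direct alternative for existence, however, has two concrete gaps. In Part (1), the Leibniz computation you invoke shows that the term $\lambda\,\Omega^{\lambda-1}\,d\Omega\wedge L_{h^{-1}}^{*}\alpha$ vanishes precisely when $\lambda=0$; this matches the condition $Q_{i,i+1}$ listed after Theorem \ref{thm:Dnonzero}, namely $(\lambda,\nu)=(0,0)$ and $\delta\equiv\varepsilon\equiv 0\pmod 2$, but is in conflict with the parameters $(\lambda,\nu)=(n-2i,n-2i+3)$, $\delta\equiv\varepsilon\equiv 1$ appearing in the theorem statement; you assert ``composing and matching weights gives precisely $(\lambda,\nu)=(n-2i,n-2i+3)$'' without noticing or resolving this discrepancy, and hence your uniqueness step (which relies on $Q_{i,i+1}$ forcing exactly those values) does not actually close. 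In Part (2), the claim that $\mathcal{D}_{-\lambda}^{\lambda-(n-1)/2}$ applied componentwise to $1$-forms is conformally covariant on its own is false: the Juhl operator is a symmetry breaking operator only \emph{after} composing with $\operatorname{Rest}_{x_n=0}$, and none of the three factors $d$, the componentwise scalar operator on $\mathbb{R}^n$, and restriction is separately $\operatorname{Conf}(X;Y)$-equivariant when $\lambda\neq 0$. Covariance of the composite therefore requires a genuine cancellation argument on the full composition, which your sketch does not supply. This is exactly why the $j=i+1$ case is nontrivial and why \cite{KKP} uses the F-method, turning the covariance condition into a finite algebraic identity rather than a chain of individually-covariant maps.
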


\begin{remark}
\begin{enumerate}
\item[{\rm{(1)}}]
By using the Hodge $\ast$ operator on $X$ or its submanifold $Y$, 
 the other six cases can be reduced to either the $j=i$ case
 (Theorem \ref{thm:SBOji}) or the $j=i+1$ case (Theorem \ref{thm:SBOji1}).  
The construction and classification of differential symmetry breaking operators
 in the model space \eqref{eqn:XYsph} is thus completed.  
Its generalization to the pseudo-Riemannian case is proved in \cite{KKP2}.  
\item[{\rm{(2)}}]
Special cases of Theorem \ref{thm:SBOji} were known earlier.  
The case $j=i=0$ (scalar-valued case) was discovered
 by A.~Juhl \cite{Juhl}.  
Different approaches have been proposed
 by Fefferman--Graham \cite{FG13}, 
 Kobayashi--{\O}rsted--Sou{\v c}ek--Somberg \cite{KOSS},
 and Clerc \cite{Cl} among others.  
Our approach uses an algebraic Fourier transform
 of Verma modules
 ({\it{F-method}}), 
 see \cite{xkHelg85, KP1}.  
\item[{\rm{(3)}}]
The case $n=2$ is closely
 related to the celebrated Rankin--Cohen bidifferential operator
 via holomorphic continuation \cite{KP2}.  
\end{enumerate}
\end{remark}

\section{Classification theory: nonlocal conformally covariant SBOs}
\label{sec:CSBO}
In this section 
 we consider nonlocal operators such as integral operators as well,
and thus complete the classification problem 
 (Problem \ref{prob:conf})
 for the model space $(X,Y)=(S^n,S^{n-1})$.

Building on the classification results
 on $D \isawatyp$ in Section \ref{sec:DSBO}, 
 we want to 
\begin{enumerate}
\item[$\bullet$]
find $\dim_{\mathbb{C}} H\isawatyp/D \isawatyp$;
\item[$\bullet$]
find a basis in $H\isawatyp$ modulo $D \isawatyp$. 
\end{enumerate}
This idea fits well with the general strategy
 to understand the whole space of symmetry breaking operators
 between principal series representations
 of a reductive group 
 and its subgroup $G'$
 by using the filtration 
 given by the support of distribution kernels
 \cite[Chap.~11, Sec.~2]{sbon}.  
Thus we start with the general setting
 where $(G,G')$ is a pair of real reductive Lie groups.  
Let $P=M A N$ and $P'=M' A' N'$ be Langlands decompositions of minimal parabolic subgroups 
 of $G$ and $G'$, 
respectively.  
For an irreducible representation $(\sigma,V)$ of $M$
 and a one-dimensional representation ${\mathbb{C}}_{\lambda}$ of $A$, 
 we define a principal series representation of $G$
 by unnormalized parabolic induction
\[
  I(\sigma,\lambda):={\operatorname{Ind}}_P^G(\sigma \otimes {\mathbb{C}}_{\lambda} \otimes {\bf{1}}).  
\]
Similarly,
 we define that of the subgroup $G'$, 
 to be denoted by 
\[
  J(\tau,\nu):={\operatorname{Ind}}_{P'}^{G'}(\tau \otimes {\mathbb{C}}_{\nu} \otimes {\bf{1}})
\]
 for an irreducible representation $(\tau,W)$ of $M'$
 and a one-dimensional representation ${\mathbb{C}}_{\nu}$ of $A'$.

By abuse of notation,
 we identify a representation
 with its representations space,
 and set $V_{\lambda}:=V \otimes {\mathbb{C}}_{\lambda}$
 and $W_{\nu}:=W \otimes {\mathbb{C}}_{\nu}$.  
Let ${\mathcal{V}}_{\lambda}^{\ast}$ be the dualizing bundle
 of the $G$-homogeneous bundle $G \times_P V_{\lambda}$
 over the real flag manifold $G/P$.  
Then there is a natural linear bijection 
 between the space of symmetry breaking operators
 and the space of invariant distributions
 (see \cite[Prop.~3.2]{sbon}):
\begin{equation}
\label{eqn:SBO}
{\operatorname{Hom}}_{G'}(I_{\delta}(\sigma,\lambda)|_{G'}, J_{\varepsilon}(\tau,\nu))
\overset \sim \rightarrow
({\mathcal{D}}'(G/P, {\mathcal{V}}_{\lambda}^{\ast}) \otimes W_{\nu})^{\Delta(P')}, 
\quad
T \mapsto K_T,
\end{equation}

Suppose now that the condition (PP) in Theorem \ref{thm:PPBB} is fulfilled.  
Then this implies
 that $\#(P' \backslash G/P)<\infty$, 
see \cite[Rem.~2.5 (4)]{xKOfm}.  
We denote by $\{Z_{\alpha}\}$ the totality
 of $P'$-orbits on $G/P$.  
We define a partial order $\alpha \prec \beta$
 by $Z_{\alpha} \subset \overline {Z_{\beta}}$, 
 the closure of $Z_{\beta}$ in $G/P$.  
Then there is the unique minimal index $\alpha_{\operatorname{min}}$
 corresponding to the closed $P'$-orbit in $G/P$, 
 and maximal ones $\beta_1$, $\cdots$, $\beta_N$
 corresponding to open $P'$-orbits in $G/P$.

We observe
 that the support ${\operatorname{Supp}}(K_T)$
 of the distribution kernel $K_T$ is a closed $P'$-invariant subset
 of $G/P$, 
 and accordingly,
 define 
\[
  H(\alpha)\equiv H_{\tau,\nu}^{\sigma,\lambda}(\alpha)
:=
  \{T \in {\operatorname{Hom}}_{G'}(I(\sigma,\lambda)|_{G'}, J(\tau,\nu)):
  {\operatorname{Supp}}(K_T) \subset \overline {Z_{\alpha}}
\}
\]
via the isomorphism \eqref{eqn:SBO}.  
Clearly, 
$ 
H(\alpha)  \subset H(\beta)
$
 if $\alpha \prec \beta$.  
It follows from \cite[Lem.~2.3]{KP1} that 
\[
H(\alpha_{\operatorname{min}}) ={\operatorname{Diff}}_{G'}(I(\sigma,\lambda)|_{G'}, J(\tau,\nu)).  
\]
In contrast to the smallest support $Z_{\alpha_{\operatorname{min}}}$,
 a symmetry breaking operator $T$ is called {\it{regular}}
 (\cite[Def.~3.3]{sbon})
 if ${\operatorname{Supp}}(K_T)$ contains $Z_{\beta_j}$
 for some $1 \le j \le N$.

We now return to the special setting \eqref{eqn:Lorentz}.  
Then the Levi subgroup $MA$ of the minimal parabolic subgroup $P=MAN$
 of $G=O(n+1,1)$ is given by $(O(n) \times O(1)) \times {\mathbb{R}}$.  
For $0 \le i \le n$, 
 $\delta \in \{\pm\}$, 
 and $\lambda \in {\mathbb{C}}$, 
 we consider the outer tensor product representation
 $\Exterior^i({\mathbb{C}}^n) \boxtimes \delta \boxtimes {\mathbb{C}}_{\lambda}$
 of $M A$, 
 and extend it to $P$
 by letting $N$ act trivially.  
The resulting $P$-module is denoted simply by $\Exterior^i({\mathbb{C}}^n) \otimes \delta \otimes {\mathbb{C}}_{\lambda}$.  
We define an unnormalized principal series representation 
 of $G=O(n+1,1)$ by
\[
  I_{\delta}(i,\lambda)
  \equiv I(\Exterior^i({\mathbb{C}}^n) \boxtimes \delta, {\lambda})
:={\operatorname{Ind}}_P^G
 (\Exterior^i({\mathbb{C}}^n) \otimes \delta \otimes {\mathbb{C}}_{\lambda}).  
\]
\begin{lemma}
\label{lem:ps}
Let $0 \le i \le n$, $\delta \in \{\pm\}$, $\lambda \in {\mathbb{C}}$.  
\begin{enumerate}
\item[{\rm{(1)}}]
The $G$-module $I_{\delta}(i,\lambda)$ is irreducible
 if $\lambda \not \in {\mathbb{Z}}$.  
\item[{\rm{(2)}}]
There is a natural isomorphism
 ${\mathcal{E}}^i(S^{n})_{\lambda,\delta}
 \simeq I_{(-1)^i \delta}(i,\lambda+i)$
 as $G$-modules.  
\end{enumerate}
\end{lemma}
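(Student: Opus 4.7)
The lemma has two logically independent parts; I would treat them separately.

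\medskip

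\noindent\textbf{Plan for (2): identification with a principal series.} The strategy is to realize $S^n$ as the real flag variety $G/P$, unwind $\varpi^{(i)}_{\lambda,\delta}$ via Frobenius reciprocity, and compute the $P$-character on the fiber at the base point. Fix $o = eP \in S^n$. Because $N$ acts trivially on $T_oS^n$ and the map $\nbar \to T_oS^n$ induced by the exponential map is a $P$-isomorphism, $M^{\operatorname{ss}} \cong O(n)$ acts on $T_oS^n \cong \mathbb{R}^n$ by its standard representation, $A$ acts by the weight $-\beta$ (where $\beta$ is the unique positive restricted root), and the central factor $O(1) \subset M$ acts trivially. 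Using the $M$-invariant Euclidean inner product to identify $\nbar^* \cong \nbar$, the complexified fiber of $\Exterior^i T^*S^n$ at $o$ becomes $\Exterior^i(\mathbb{C}^n) \otimes \mathbb{C}_{i\beta}$ as a $P$-module. Frobenius reciprocity identifies $\mathcal{E}^i(S^n) \otimes \mathbb{C}$ with the unnormalized induced representation from this fiber.

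\medskip

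To install the twists in \eqref{eqn:varpi}, I would compute that for $h = man \in P$ fixing $o$, the differential $(L_h)_{*o}$ on $\nbar$ equals $e^{-\beta(\log a)} \cdot m|_{\nbar}$, so $\Omega(h,o) = e^{-\beta(\log a)}$ and the cocycle relation gives $\Omega(h^{-1},o) = e^{\beta(\log a)}$. Hence the twist $\Omega(h^{-1},\cdot)^{\lambda}$ multiplies the $A$-weight on the fiber by $\lambda\beta$, producing the total weight $(\lambda+i)\beta$ as claimed. Next, $\mathpzc{or}(h)|_M = \operatorname{sgn}\det(L_h)_{*o} = (\det_{O(n)}, \mathbf{1}_{O(1)})$, so naively the twist by $\mathpzc{or}^\delta$ yields the inducing $M$-representation $\Exterior^i(\mathbb{C}^n) \otimes (\det)^\delta$ on the $O(n)$-factor and trivial on the $O(1)$-factor. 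The final step is to transfer this sign to the $O(1)$-factor via the standard isomorphism of $M$-modules
\[
   \Exterior^i(\mathbb{C}^n) \otimes \bigl((\det_{O(n)})^\delta \boxtimes \mathbf{1}_{O(1)}\bigr)
   \;\simeq\;
   \Exterior^i(\mathbb{C}^n) \boxtimes \bigl((-1)^i\delta\bigr),
\]
which comes from decomposing the central element $-I_{n+2} \in G$ as a product of $-I_n \in O(n)$ (acting by $(-1)^i$ on $\Exterior^i(\mathbb{C}^n)$), the generator of the $O(1)$-factor, and an element of the identity component. This yields the isomorphism $\mathcal{E}^i(S^n)_{\lambda,\delta} \simeq I_{(-1)^i\delta}(i, \lambda+i)$.

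\medskip

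\noindent\textbf{Plan for (1): irreducibility off $\mathbb{Z}$.} I would invoke the standard rank-one Knapp--Stein theory. The nontrivial element of the little Weyl group $W(\mathfrak{g},\mathfrak{a}) \cong \mathbb{Z}/2$ lies in $M$, so it acts trivially on the inducing $M$-type, and there is a meromorphic family of intertwining operators $\mathcal{T}_\lambda \colon I_\delta(i,\lambda) \to I_\delta(i,-\lambda)$ whose composition $\mathcal{T}_{-\lambda}\circ\mathcal{T}_\lambda$ is scalar multiplication by an explicit ratio of Gamma factors with zeros and poles confined to $\mathbb{Z}$. For $\lambda \notin \mathbb{Z}$, $\mathcal{T}_\lambda$ is a bijective $G$-intertwiner. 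Combined with the fact that $\dim \operatorname{Hom}_M(\tau, \Exterior^i(\mathbb{C}^n)\otimes\delta) \le 1$ for every irreducible $K$-type $\tau$, so $\operatorname{End}_G(I_\delta(i,\lambda))$ is commutative, and with the regularity of the infinitesimal character at generic $\lambda$, any nontrivial invariant subspace would force a non-scalar endomorphism and hence a contradiction; thus $I_\delta(i,\lambda)$ is irreducible.

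\medskip

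\noindent\textbf{Main obstacle.} Part (1) is essentially a reduction to well-known $c$-function and multiplicity-one arguments for rank-one reductive groups. The delicate step is part (2): correctly pinning down the orientation character on the disconnected group $M = O(n) \times O(1)$ and tracking how the central element $-I_{n+2}$ transports the $\det$-twist on $O(n)$ to the sign $(-1)^i\delta$ on $O(1)$. Once that sign bookkeeping is handled, both assertions fall out by routine parabolic-induction computations.
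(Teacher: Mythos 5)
Your plan for part (1) is a reasonable rank-one Knapp--Stein argument and is essentially standard. The genuine issue is in part (2), in precisely the step you flag as delicate. After twisting by $\mathpzc{or}^{\delta}$ you correctly find that the $O(n)$-factor acquires a $\det^{\delta}$ twist; but your proposed ``transfer'' isomorphism
\[
   \Exterior^i(\mathbb{C}^n) \otimes \bigl((\det_{O(n)})^\delta \boxtimes \mathbf{1}_{O(1)}\bigr)
   \;\simeq\;
   \Exterior^i(\mathbb{C}^n) \boxtimes \bigl((-1)^i\delta\bigr)
\]
is not an isomorphism of $M\cong O(n)\times O(1)$-modules: for $\delta=-$ and $0<i<n$, the restrictions to the $O(n)$-factor are $\Exterior^i(\mathbb{C}^n)\otimes\det$ and $\Exterior^i(\mathbb{C}^n)$, which are non-isomorphic irreducible $O(n)$-modules. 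No decomposition of the central element $-I_{n+2}$ can change this, since an isomorphism of $M$-modules must in particular be $O(n)$-equivariant. Your accompanying explanation is also internally inconsistent: with the ``central'' choice $O(1)=\{\pm I_{n+2}\}$ (which is what makes ``$O(1)$ acts trivially on $T_oS^n$'' and ``$\mathpzc{or}|_{O(1)}=\mathbf 1$'' true), the element $-I_{n+2}$ \emph{is} the generator of $O(1)$, so it does not decompose as a product of $-I_n\in O(n)$, that generator, and something else; whereas with the non-central choice $O(1)=\{1,\operatorname{diag}(I_n,-1,-1)\}$, the generator acts by $-\mathrm{id}$ on $T_oS^n$ (not trivially), contradicting your step 1. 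Either way the sign bookkeeping in the last step is not carried out correctly.

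The paper does not prove Lemma~\ref{lem:ps}~(2) here; it cites \cite[Prop.~2.3]{KKP}. To repair your argument you would need to pin down the precise identification $M\cong O(n)\times O(1)$ used in that reference and redo the fiber computation with that convention fixed from the start, rather than attempting an after-the-fact ``transfer'' of the $\det$-twist between the two factors.
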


For the proof of Lemma \ref{lem:ps}~(2), 
 see {\cite[Prop.~2.3]{KKP}}.  

Lemma \ref{lem:ps} (2) suggests
 that we can reformulate Problem \ref{prob:conf}
 about differential forms
 on the pair of conformal manifolds \eqref{eqn:XYsph}
 into a question of symmetry breaking operators
 between principal series representations
 for the pair \eqref{eqn:Lorentz} of reductive groups.  
We write $\widetilde D$ and $\widetilde H$
 if we use $I_{\delta}(i,\lambda)$ and $J_{\varepsilon}(j,\nu)
={\operatorname{Ind}}_{P'}^{G'}(\Exterior^j({\mathbb{C}}^{n-1})\otimes \varepsilon \otimes {\mathbb{C}}_{\nu})$
 instead of $D$ and $H$
 in \eqref{eqn:D} and \eqref{eqn:H}, 
 respectively.  
By Lemma \ref{lem:ps} (2), 
 we have 
\[
  H \isawatyp =\widetilde H \isawavar i {\lambda+i} {(-1)^i \delta} j {\nu+j}{(-1)^j \varepsilon}
\]
and similarly for $D$ and $\widetilde D$.  
Thus we want to 
\begin{enumerate}
\item[$\bullet$]
find $\dim_{\mathbb{C}} \widetilde H \isawatyp/\widetilde D \isawatyp$;
\item[$\bullet$]
find a basis in $\widetilde H \isawatyp$ modulo $\widetilde D \isawatyp$. 
\end{enumerate}
First, we obtain:
\begin{theorem}[localness theorem]
\label{thm:4A}
If $j \ne i-1$ or $i$, 
 then 
\[
   \widetilde H \isawatyp = \widetilde D \isawatyp.
\]
\end{theorem}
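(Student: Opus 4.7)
The plan is to analyze the distribution kernel $K_T$ attached to $T \in \widetilde H \isawatyp$ via \eqref{eqn:SBO} and to prove that $\operatorname{Supp}(K_T) \subset \{x_0\} = Z_{\alpha_{\min}}$ whenever $j \notin \{i-1,i\}$, which by \cite[Lem.~2.3]{KP1} is equivalent to $T \in \widetilde D \isawatyp$. First I would record the stratification of $G/P \simeq S^n$ by $P'$-orbits for $(G,G')=(O(n+1,1),O(n,1))$: the closed orbit $\{x_0\}$ on the equator, the intermediate orbit $S^{n-1}\setminus\{x_0\}$, and the two open hemispheres covering $S^n \setminus S^{n-1}$. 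The argument then splits into one step for the open orbits and one for the intermediate orbit, linked by a conormal filtration.

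The technical heart is a branching obstruction in the fiber. For a point $z$ in any non-closed $P'$-orbit, the stabilizer $\operatorname{Stab}_{P'}(z)$ contains, up to conjugation, $M \cap M' \supset O(n-1)$, and the fiber of $\mathcal{V}_\lambda^{\ast} \otimes W_\nu$ at $z$ is, as an $O(n-1)$-module,
\begin{equation*}
  \operatorname{Hom}_{\mathbb{C}}(\Exterior^i({\mathbb{C}}^n), \Exterior^j({\mathbb{C}}^{n-1})),
\end{equation*}
because the character twists by ${\mathbb{C}}_{-\lambda}$, ${\mathbb{C}}_{\nu}$, $\delta$, $\varepsilon$ are trivial on $O(n-1)$. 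The standard branching $\Exterior^i({\mathbb{C}}^n)|_{O(n-1)} \simeq \Exterior^i({\mathbb{C}}^{n-1}) \oplus \Exterior^{i-1}({\mathbb{C}}^{n-1})$ shows that this Hom space has an $O(n-1)$-invariant vector if and only if $j \in \{i-1,i\}$. Since a $P'$-equivariant distribution on a transitive $P'$-space is determined by a stabilizer-invariant element of the fiber, the hypothesis $j \notin \{i-1,i\}$ forces $K_T$ to vanish on each open hemisphere, so $\operatorname{Supp}(K_T) \subset S^{n-1}$.

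I would then iterate along the conormal direction to $S^{n-1}$. A distribution on $S^n$ supported on $S^{n-1}$ admits the canonical local expansion
\begin{equation*}
  K_T = \sum_{k=0}^{N} \frac{\partial^{k}}{\partial x_n^{k}} \delta_{x_n=0} \otimes f_k,
  \qquad f_k \in {\mathcal{D}}'(S^{n-1}),
\end{equation*}
and $P'$ preserves this filtration by conormal order. Because the conormal line of $S^{n-1}\subset S^n$ is $O(n-1)$-trivial, each layer $f_k$ inherits the structure of a $P'$-equivariant distribution on $S^{n-1}$ valued in the same Hom space, twisted only by a shift of the $A\cap A'$-character that does not affect the $O(n-1)$-type. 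Restricting $f_k$ to the open intermediate orbit $S^{n-1}\setminus\{x_0\}$ and reapplying the branching obstruction forces $\operatorname{Supp}(f_k) \subset \{x_0\}$ for every $k$, whence $\operatorname{Supp}(K_T) \subset \{x_0\}$ and $T \in \widetilde D \isawatyp$.

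The main obstacle is the bookkeeping in the iteration: one has to verify that $P'$ genuinely preserves the conormal filtration on distributions supported on $S^{n-1}$, and to track how $A \cap A'$ acts on each conormal symmetric power so that the induced fiber representation at depth $k$ still restricts to the same $O(n-1)$-type as at depth zero. Once this is in place, the $O(n-1)$-branching obstruction propagates uniformly in $k$, and the theorem follows by induction on the transversal order.
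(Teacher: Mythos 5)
Your argument---constraining $\operatorname{Supp}(K_T)$ orbit by orbit via the $O(n-1)$-branching obstruction $\operatorname{Hom}_{O(n-1)}(\Exterior^i(\mathbb{C}^n), \Exterior^j(\mathbb{C}^{n-1})) = 0$ for $j \notin \{i-1,i\}$, so that $K_T$ must collapse to the closed $P'$-orbit and $T$ must be differential---is exactly the support-filtration strategy the paper indicates (with the detailed proof deferred to \cite{sbonvec}). Two small corrections: the paper records that $S^n \setminus S^{n-1}$ is a \emph{single} open $P'$-orbit rather than two, and the layers $f_k$ are not individually $P'$-equivariant (only the filtration by transversal order is preserved), so the descent should run through the top associated-graded piece, which is precisely the ``bookkeeping'' you correctly flag as the remaining work.
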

In the setting \eqref{eqn:Lorentz}, 
there exists a unique open $P'$-orbit in $G/P$, 
 and accordingly,
 there exists at most one family
 of (generically) regular symmetry breaking operators from the $G$-modules
 $I_{\delta}(i,\lambda)$ to the $G'$-modules $J_{\varepsilon}(j,\nu)$.  
We prove
 that such a family exists
 if and only if $j=i-1$ or $i$, 
 and it plays a crucial role 
 in the classification problem
 of SBOs modulo the space
 $\widetilde D \isawatyp$
 of differential SBOs as follows.  
We introduce the set of \lq\lq{special parameters}\rq\rq\ by 
\begin{multline}
\label{eqn:singset}
  \Psi_{\operatorname{sp}}
  :=
  \left\{(\lambda,\nu, \delta, \varepsilon) \in {\mathbb{C}}^2 \times \{\pm\}^2
   :
   \text{$\nu-\lambda \in 2 {\mathbb{N}}$ when $\delta \varepsilon =1$}
  \right.
\\
  \left.
   \text{ or 
         $\nu-\lambda \in 2 {\mathbb{N}}+1$ when $\delta \varepsilon =-1$}
   \right\}.  
\end{multline}

\begin{theorem}
\label{thm:4B}
Suppose $j=i-1$ or $i$, 
 and $\delta, \varepsilon \in \{\pm\}$.  
Then there exists a family of continuous $G'$-homomorphism
\[
  \Atbb \lambda \nu {\delta\varepsilon}{i,j}
  \colon 
  I_{\delta}(i,\lambda) \to J_{\varepsilon}(j,\nu)
\]
such that $\Atbb \lambda \nu {\delta\varepsilon}{i,j}$ depends holomorphically
 on $(\lambda,\nu)\in {\mathbb{C}}^2$
 and that the set of the zeros of $\Atbb \lambda \nu {\delta\varepsilon}{i,j}$
 is discrete in $(\lambda,\nu)\in {\mathbb{C}}^2$.  

\begin{enumerate}
\item[{\rm{(1)}}]
If $(\lambda,\nu, \delta, \varepsilon) \not \in \Psi_{\operatorname{sp}}$
 then $\Atbb \lambda \nu {\delta\varepsilon}{i,j} \ne 0$
 and 
\[
 \widetilde H \isawatyp
 = {\mathbb{C}} \Atbb \lambda \nu {\delta\varepsilon}{i,j} \supsetneqq
   \widetilde D \isawatyp=\{0\}.  
\]
\item[{\rm{(2)}}]
If $(\lambda,\nu, \delta, \varepsilon) \in \Psi_{\operatorname{sp}}$
 and $\Atbb \lambda \nu {\delta\varepsilon}{i,j} \ne 0$, 
 then 
\[
 \widetilde H \isawatyp
 = \widetilde D \isawatyp.  
\]
\item[{\rm{(3)}}]
If $(\lambda,\nu, \delta, \varepsilon) \in \Psi_{\operatorname{sp}}$
 and $\Atbb \lambda \nu {\delta\varepsilon}{i,j} = 0$, 
 then 
\[
 \dim_{\mathbb{C}} \widetilde H \isawatyp
 = \dim \widetilde D \isawatyp +1.  
\]
\end{enumerate}
\end{theorem}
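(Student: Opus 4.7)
The plan is to realize every continuous $G'$-intertwiner as a $P'$-invariant vector-valued distribution on the real flag variety $G/P \simeq S^n$ via \eqref{eqn:SBO}, and then to exploit the $P'$-orbit stratification of $G/P$, which is finite because the pair \eqref{eqn:Lorentz} satisfies the geometric condition (PP). For $(G,G')=(O(n+1,1),O(n,1))$ there are exactly four $P'$-orbits on $G/P$: one closed orbit (a single point $p_\infty$), one codimension-one orbit (the totally geodesic $S^{n-1}$ minus $p_\infty$), and two open orbits (the two hemispheres). Any $T \in \widetilde H\isawatyp$ has kernel supported on a $P'$-invariant closed subset, so the support filtration reads
\[
   \widetilde D\isawatyp \,=\, H(\alpha_{\min}) \,\subseteq\, H(Z_Y) \,\subseteq\, \widetilde H\isawatyp,
\]
with the first equality coming from the localness criterion \cite[Lem.~2.3]{KP1}.

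The first main step is to construct the family $\Atbb{\lambda}{\nu}{\delta\varepsilon}{i,j}$ as a distribution kernel whose support is the closure of an open $P'$-orbit. After trivializing $G/P \setminus \{p_\infty\}$ as $\mathbb{R}^n$ via stereographic projection, the scalar case $i=j=0$ of \cite{sbon} uses $|x_n|^{\lambda+\nu-n}(\operatorname{sgn} x_n)^{\delta\varepsilon}$, whose meromorphic continuation in $(\lambda,\nu)$ yields a holomorphic family of distributions with pole set contained in $\Psi_{\operatorname{sp}}$. For the form-valued case with $j\in\{i-1,i\}$, I would twist the scalar kernel by a fixed $\operatorname{Hom}(\Exterior^i(\mathbb{C}^n),\Exterior^j(\mathbb{C}^{n-1}))$-valued factor built from the restriction along $\{x_n=0\}$, the inner multiplication $\iota_{\partial/\partial x_n}$, and the wedge with $dx_n$, choosing the linear combination that restores $P'$-equivariance. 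Direct verification of covariance on the open orbit, plus density, extends this to a well-defined $G'$-homomorphism $\Atbb{\lambda}{\nu}{\delta\varepsilon}{i,j}$.

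The second step is a graded analysis of the support filtration. I would compute each graded quotient by writing $P'$-invariant distributions in the normal-bundle expansion around $Y$ and around $\{p_\infty\}$; each quotient is at most one-dimensional. Combined with Theorem \ref{thm:4A}, the open-orbit piece is generated precisely by $\Atbb{\lambda}{\nu}{\delta\varepsilon}{i,j}$. Meanwhile, Theorem \ref{thm:Dnonzero} together with Lemma \ref{lem:ps}(2) computes $\widetilde D\isawatyp$: it vanishes outside $\Psi_{\operatorname{sp}}$ and is one-dimensional inside. Case (1) then follows because $\Atbb{\lambda}{\nu}{\delta\varepsilon}{i,j}$ has nonvanishing open-orbit coefficient off $\Psi_{\operatorname{sp}}$ while $\widetilde D=0$; case (2) is the phenomenon that at special parameters the scalar distribution $|x_n|^\mu$ reduces to a derivative of $\delta(x_n)$, so that the surviving nonzero $\Atbb{\lambda}{\nu}{\delta\varepsilon}{i,j}$ becomes differential and equals the basis of $\widetilde D$; case (3) is where $\Atbb{\lambda}{\nu}{\delta\varepsilon}{i,j}$ vanishes and one renormalizes by dividing by the vanishing scalar factor to produce a nonzero operator whose kernel retains genuine open-orbit support, giving one extra dimension beyond $\widetilde D$.

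The main obstacle is the precise identification, in the matrix-valued setting, of the locus of zeros of $\Atbb{\lambda}{\nu}{\delta\varepsilon}{i,j}$ and the verification that at each such zero the renormalized limit really retains open-orbit support rather than collapsing back to $Y$. This requires matching the scalar poles of $|x_n|^\mu$ with the rank and kernel of the matrix factor under the $M'$-action on $\Exterior^i(\mathbb{C}^n) \to \Exterior^j(\mathbb{C}^{n-1})$; the parity dichotomy $\delta\varepsilon=\pm 1$ in the definition \eqref{eqn:singset} of $\Psi_{\operatorname{sp}}$ emerges precisely from this matching. Carrying out the bookkeeping in all four subcases $(j,\delta\varepsilon)\in\{i-1,i\}\times\{\pm\}$, and distinguishing cases (2) and (3) inside $\Psi_{\operatorname{sp}}$, is where the computational weight of the proof lies.
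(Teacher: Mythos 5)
The paper itself does not prove Theorem~\ref{thm:4B}; it explicitly defers the detailed argument to the separate papers \cite{xkresidue} and \cite{sbonvec}, and what Section~\ref{sec:CSBO} supplies is only the strategic frame (distribution kernels via \eqref{eqn:SBO}, the support filtration, and the statement that a unique meromorphic family of regular SBOs exists for $j\in\{i-1,i\}$). Your outline tracks that frame rather faithfully, so the overall route you propose is the right one.

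That said, there is a genuine factual error in your orbit analysis. You assert four $P'$-orbits on $G/P\cong S^n$, with the two hemispheres giving two distinct open orbits. The paper says explicitly, just above the theorem, that ``there exists a unique open $P'$-orbit in $G/P$,'' and this is correct: $M'\cong O(n-1)\times O(1)$ acts on the Bruhat cell $\overline N\cong\mathbb R^n$ through $(h,s)\mapsto s\cdot\mathrm{diag}(h,1)$, and taking $h=-I_{n-1}$, $s=-1$ produces the reflection $x_n\mapsto -x_n$ while fixing $(x_1,\dots,x_{n-1})$. So the two hemispheres lie in a single $P'$-orbit and there are exactly three orbits: the closed point $p_\infty$, the middle orbit $Y\setminus\{p_\infty\}$, and one open orbit. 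This is not a cosmetic slip: the fact that the reflection already sits inside $M'$ is precisely what forces a single meromorphic family $\Atbb\lambda\nu{\delta\varepsilon}{i,j}$ (rather than two independent half-space kernels), and it is the source of the signature factor $(\mathrm{sgn}\,x_n)^{\delta\varepsilon}$ and of the parity dichotomy in the definition \eqref{eqn:singset} of $\Psi_{\mathrm{sp}}$. Your plan reconstructs these features by ``choosing the linear combination that restores $P'$-equivariance,'' but the equivariance constraint it imposes is exactly the one coming from the $M'$-element you omitted; if there really were two separate open orbits, the multiplicity bound \eqref{eqn:multfour} could not come out as it does, and Theorem~\ref{thm:4B}(1) would allow $\dim\widetilde H>1$ off $\Psi_{\mathrm{sp}}$.

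Two smaller remarks. First, the scalar distribution kernel from \cite{sbon} is not $|x_n|^{\lambda+\nu-n}(\mathrm{sgn}\,x_n)^{\delta\varepsilon}$ alone; it carries an additional tangential factor (schematically $|x|^{-2\nu}$) encoding $A'N'$-covariance, and that factor contributes its own poles; the distinction between cases (2) and (3) hinges on which of the two analytic factors degenerates. Second, once you correct the orbit count, your proposed graded analysis has only two steps (closed/middle vs.\ open), and the claim that the open-orbit graded piece is at most one-dimensional is exactly the paper's ``at most one family of regular SBOs.'' With those adjustments your proposal is consistent with the approach that \cite{sbonvec} and \cite{xkresidue} carry out.
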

The discrete set
$
 \{
  (i,j,\lambda,\nu,\delta,\varepsilon):
  \Atbb \lambda \nu {\delta\varepsilon}{i,j} =0
 \}
$
 has been determined in \cite{xkresidue}, 
 and thus the classification of conformally covariant symmetry breaking operators
\[
  {\mathcal{E}}^i(X)_{\lambda,\delta} \to {\mathcal{E}}^j(Y)_{\nu,\varepsilon}
\]
for the model space $(X,Y)=(S^n,S^{n-1})$ 
 is accomplished.  
A detailed proof for the classification 
 together with some important properties
 of symmetry breaking operators
 (Stage C)
 such as
\begin{enumerate}
\item[$\bullet$]
$(K,K')$-spectrum
 (a generalized eigenvalue), 
\item[$\bullet$]
functional equations, 
\item[$\bullet$]
residue formul{\ae}, 
\end{enumerate}
 will be given in separate papers
 (see \cite{xkresidue} for the residue formul{\ae}, 
 and \cite{sbonvec} for the classification).

\section{Application to periods and automorphic form theory}
\label{sec:period}
Let $G$ be a reductive group,
 and $H$ a reductive subgroup.  
\begin{definition}
\label{def:7.1}
An irreducible admissible smooth representation $\Pi$ of $G$
 is {\it{$H$-distinguished}}
 if $\invHom H {\Pi|_{H}}{\mathbb{C}} \ne \{0\}$.  
In this case,
 it is also said
 that $\Pi$ has an {\it{$H$-period}}.  
By the Frobenius reciprocity theorem, 
 the condition is equivalent
 to $\invHom G \Pi {C^{\infty}(G/H)} \ne \{0\}$.  
\end{definition}
In this section,
 we discuss an application
 of symmetry breaking operators
 to find periods (Definition \ref{def:7.1})
 of irreducible unitary representations.  
We highlight the case
 when $\Pi$ has nonzero $({\mathfrak{g}},K)$-cohomologies.  
The motivation comes from automorphic form theory, 
 of which we now recall a prototype.

\begin{fact}
[Matsushima--Murakami, {\cite{BW}}]
Let $\Gamma$ be a cocompact discrete subgroup of $G$.  
Then we have
\[
   H^{\ast}(\Gamma \backslash G/K;{\mathbb{C}})
   \simeq
   \bigoplus_{\Pi \in \widehat G}
   {\operatorname{Hom}}_G
   (\Pi,L^2(\Gamma\backslash G)) \otimes H^{\ast}({\mathfrak {g}},K;\Pi_K).  
\]
\end{fact}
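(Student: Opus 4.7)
The plan is to proceed in three main steps, following the classical Matsushima--Murakami strategy: first reinterpret the de Rham cohomology of the compact locally symmetric space as relative Lie algebra cohomology with coefficients in a smooth $G$-representation, then insert the $L^2$-decomposition coming from cocompactness, and finally interchange cohomology with the resulting direct sum via Hodge theory.

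First I would recall that, assuming $\Gamma$ acts freely on $G/K$ (the general orbifold case is the same), the quotient $\Gamma\backslash G/K$ is a compact manifold whose de Rham complex pulls back to the $K$-basic part of $\Omega^{*}(\Gamma\backslash G)$. Under the identification of invariant forms on $G/K$ with $\Exterior^{*}(\mathfrak{g}/\mathfrak{k})^{*}$, the $K$-basic de Rham complex of $\Gamma\backslash G$ is precisely the standard cochain complex $\operatorname{Hom}_{K}(\Exterior^{*}(\mathfrak{g}/\mathfrak{k}),C^{\infty}(\Gamma\backslash G))$ computing $H^{*}(\mathfrak{g},K;C^{\infty}(\Gamma\backslash G))$. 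This gives the first isomorphism $H^{*}(\Gamma\backslash G/K;{\mathbb{C}})\simeq H^{*}(\mathfrak{g},K;C^{\infty}(\Gamma\backslash G))$.

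Next I would use the Gelfand--Piatetski-Shapiro theorem: since $\Gamma$ is cocompact, $L^{2}(\Gamma\backslash G)$ decomposes as a Hilbert direct sum $\widehat{\bigoplus}_{\Pi\in\widehat{G}}\,m_{\Gamma}(\Pi)\,\Pi$ with $m_{\Gamma}(\Pi)=\dim\operatorname{Hom}_{G}(\Pi,L^{2}(\Gamma\backslash G))<\infty$. Passing to smooth vectors and using the natural identification $C^{\infty}(\Gamma\backslash G)=L^{2}(\Gamma\backslash G)^{\infty}$ on a compact quotient (via elliptic regularity for the Laplacian), the complex becomes one of smooth vectors of a sum of irreducibles. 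By the Casselman--Wallach theorem recalled earlier in the paper, $(\mathfrak{g},K)$-cohomology depends only on the underlying Harish-Chandra module, so $H^{*}(\mathfrak{g},K;\Pi^{\infty})=H^{*}(\mathfrak{g},K;\Pi_{K})$.

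The hard part will be interchanging the $(\mathfrak{g},K)$-cohomology functor with the infinite Hilbert direct sum. I would handle this by Hodge theory on $\Gamma\backslash G/K$: the Laplacian $\Delta$ on forms with values in the flat bundle is (up to curvature terms) the Casimir operator $\Omega$ of $G$ acting on $L^{2}(\Gamma\backslash G)$-valued forms, which acts by the scalar $\chi_{\Pi}(\Omega)$ on the $\Pi$-isotypic component. Since $H^{*}(\mathfrak{g},K;\Pi_{K})$ vanishes unless $\Pi$ has the same infinitesimal character as the trivial representation, only finitely many $\Pi$ with $\chi_{\Pi}(\Omega)=0$ can contribute harmonic representatives; on those components the spectral decomposition is honestly a finite direct sum, and the interchange becomes valid. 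Combining the three steps yields the claimed isomorphism.
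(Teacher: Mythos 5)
The paper does not prove this statement: it is recorded as a citation to Borel--Wallach, so there is no internal proof for your argument to be compared against. Evaluating your proposal on its own merits, the three-step outline (de Rham $\to$ relative Lie algebra cohomology, Gelfand--Piatetski-Shapiro decomposition of $L^2(\Gamma\backslash G)$, then interchanging cohomology with the decomposition) is indeed the classical Matsushima--Murakami route, and steps one and two are correct as you state them.

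The issue is in step three. You invoke Hodge theory and the Casimir to justify interchanging $(\mathfrak{g},K)$-cohomology with the Hilbert direct sum, but this is both heavier than necessary and, as written, incomplete. The cleaner argument avoids the analytic interchange entirely: because $\Exterior^{\ast}(\mathfrak{g}/\mathfrak{k})$ is a finite-dimensional $K$-module, the complex $\operatorname{Hom}_{K}(\Exterior^{\ast}(\mathfrak{g}/\mathfrak{k}),C^{\infty}(\Gamma\backslash G))$ factors through the $K$-finite vectors $C^{\infty}(\Gamma\backslash G)_{K}=L^{2}(\Gamma\backslash G)_{K}$, and this space is an honest \emph{algebraic} direct sum $\bigoplus_{\Pi} m_{\Gamma}(\Pi)\Pi_{K}$; since any $K$-map from a finite-dimensional module lands in finitely many summands, $\operatorname{Hom}_{K}(\Exterior^{\ast}(\mathfrak{g}/\mathfrak{k}),\cdot)$ commutes with this sum purely formally, and the isomorphism follows with no Hodge theory. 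Your Hodge-theoretic shortcut, by contrast, would also require a second application of Kuga's lemma \emph{inside} each unitary isotypic piece to identify the harmonic forms in $\operatorname{Hom}_{K}(\Exterior^{p}\mathfrak{p},m_{\Gamma}(\Pi)\Pi^{\infty})$ with $m_{\Gamma}(\Pi)\,H^{p}(\mathfrak{g},K;\Pi_{K})$ — the ``Hodge theory for $(\mathfrak{g},K)$-cohomology of unitary modules'' — and this step is missing from your outline. Also note that vanishing of the Casimir eigenvalue alone does not single out the trivial infinitesimal character; what you actually need (if you go this route) is Wigner's lemma on infinitesimal characters together with the finite-dimensionality of the space of $L^2$-harmonic forms on the compact quotient.
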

The left-hand side gives topological invariants
 of the locally symmetric space
 $M=\Gamma\backslash G/K$, 
 whereas the right-hand side is described in terms of the representation theory.  
We note that ${\operatorname{Hom}}_G
   (\Pi,L^2(\Gamma\backslash G))$ is finite-dimensional 
 for all $\Pi \in \widehat G$
 by a theorem of Gelfand--Piateski-Shapiro,
 and the sum is taken over the following finite set 
\[
   \widehat G_{\operatorname{cohom}}
   :=
   \{\Pi \in\widehat G
   : H^{\ast}({\mathfrak {g}},K;\Pi_K) \ne \{0\}
   \}, 
\]
 which was classified by Vogan and Zuckerman \cite{VZ}.

In the case where $G=O(n+1,1)$, 
 there are $2(n+1)$ elements
 in $\widehat G_{\operatorname{cohom}}$.  
Following the notation in \cite[Thm.~2.6]{KKP}, 
 we label them as 
\[
  \{\Pi_{\ell,\delta} : 0 \le \ell \le n+1, \delta \in \{\pm\}\}, 
\]
and we define
\begin{alignat*}{2}
  {\operatorname{Index}}\equiv {\operatorname{Index}}_G&\colon \dual G {cohom} \to \{0,1,\cdots,n+1\},
\quad
&&\Pi_{\ell, \delta} \mapsto \ell,
\\
{\operatorname{sgn}}\equiv {\operatorname{sgn}}_G &\colon \dual G {cohom} \to \{\pm\},
\quad
&&\Pi_{\ell, \delta} \mapsto \delta.  
\end{alignat*}
We illustrate the labeling
 by two examples:
\begin{example}
[one-dimensional representations]
\label{ex:chi}
There are four one-dimensional representations of $G$, 
 which are given as
\[
 \{\Pi_{0,+}\simeq {\bf{1}}, \Pi_{0,-}, \Pi_{n+1,+}, \Pi_{n+1,-} \simeq \det\}.  
\]
\end{example}
\begin{example}
[tempered representations]
\label{ex:tempered}
For $n$ odd
 $\Pi$ is the smooth representation
 of a discrete series representation of $G$
 iff ${\operatorname{Index}}(\Pi)=\frac 1 2 (n+1)$,  
 whereas for $n$ even $\Pi$ is  that of tempered representation of $G$
 iff ${\operatorname{Index}}(\Pi) \in \{\frac n 2, \frac n 2+1\}$.  
\end{example}

We give a necessary and sufficient condition
 for the existence of symmetry breaking operators
 between irreducible representations
 of $G$ and those of the subgroup $G'$
 with nonzero $({\mathfrak{g}},K)$-cohomologies:
\begin{theorem}
[{\cite{sbonvec}}]
Let $(G,G')=(O(n+1,1),O(n,1))$,
 and $(\Pi, \pi)
 \in \widehat G_{\operatorname{cohom}} \times \widehat G_{\operatorname{cohom}}'$.  
Then the following three conditions on $(\Pi, \pi)$
 are equivalent.  
\begin{enumerate}
\item[{\rm{(i)}}]
${\operatorname{Hom}}_{G'}
   (\Pi^{\infty}|_{G'},\pi^{\infty}) \ne \{0\}$.  
\item[{\rm{(ii)}}]
The outer tensor product representation
 $\Pi^{\infty} \boxtimes \pi^{\infty}$ is ${\operatorname{diag}}(G')$-distinguished.  
\item[{\rm{(iii)}}]
${\operatorname{Index}}_{G}(\Pi)-1
 \le 
 {\operatorname{Index}}_{G'}(\pi)
 \le 
 {\operatorname{Index}}_{G}(\Pi)
$
 and ${\operatorname{sgn}}(\Pi)={\operatorname{sgn}}(\pi)$.  
\end{enumerate}
\end{theorem}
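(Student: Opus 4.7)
The plan is to deduce the theorem from the classification of continuous symmetry breaking operators between principal series representations (Theorems \ref{thm:4A} and \ref{thm:4B}, together with the residue analysis of \cite{xkresidue}) by realizing each cohomological representation as an irreducible subquotient of a conformal representation on differential forms. For the equivalence (i) $\Leftrightarrow$ (ii), the cohomological representations are unitarizable Vogan--Zuckerman modules, so $\pi^{\infty}$ is isomorphic to its contragredient up to an outer character that one tracks explicitly; standard adjunction then identifies continuous $G'$-homomorphisms $\Pi^{\infty}|_{G'} \to \pi^{\infty}$ with $\operatorname{diag}(G')$-invariant continuous linear forms on $\Pi^{\infty} \boxtimes (\pi^{\infty})^{\vee}$, whence (ii).

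For (iii) $\Rightarrow$ (i), I would embed $\Pi_{\ell,\delta}$ (resp.\ $\Pi'_{\ell',\varepsilon}$) as an irreducible subquotient of a specific $\mathcal{E}^{i}(S^{n})_{\lambda,\delta'}$ (resp.\ $\mathcal{E}^{j}(S^{n-1})_{\nu,\varepsilon'}$) at a reducibility point; via Lemma \ref{lem:ps} these become subquotients of the principal series $I_{\pm\delta'}(i,\lambda+i)$ and $J_{\pm\varepsilon'}(j,\nu+j)$. The numerical condition $\operatorname{Index}_{G'}(\pi) \in \{\operatorname{Index}_{G}(\Pi)-1,\operatorname{Index}_{G}(\Pi)\}$ corresponds precisely to the two cases $j=i$ and $j=i-1$ singled out by the localness theorem (Theorem \ref{thm:4A}) as the only ones admitting nonzero continuous, nondifferential SBOs. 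In the index-preserving case one realizes the SBO as the restriction of differential forms; in the index-dropping case one uses restriction composed with the interior product by the normal vector field (or, uniformly, the operator $\Atbb{\lambda}{\nu}{\delta\varepsilon}{i,j}$ at the relevant parameter). The sign-matching $\operatorname{sgn}(\Pi)=\operatorname{sgn}(\pi)$ is then read off by tracing $\delta,\varepsilon$ through the special-parameter set $\Psi_{\operatorname{sp}}$ in \eqref{eqn:singset}.

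For the converse (i) $\Rightarrow$ (iii), I would argue contrapositively: if (iii) fails, any continuous $G'$-homomorphism between the cohomological representations lifts, after choosing ambient principal series realizations, to a continuous SBO with $j \notin \{i-1,i\}$, which by Theorem \ref{thm:4A} must be a differential operator; the finitely many parameter possibilities permitted by Theorem \ref{thm:Dnonzero} are then eliminated by matching against the Vogan--Zuckerman data of $\Pi_{\ell,\delta}$ and $\Pi'_{\ell',\varepsilon}$, in particular the sign and index constraints. The main obstacle I expect lies in the (iii) $\Rightarrow$ (i) direction: verifying that the candidate operator $\Atbb{\lambda}{\nu}{\delta\varepsilon}{i,j}$, evaluated at the specific parameter where cohomological subquotients appear, neither vanishes on the source subquotient $\Pi_{\ell,\delta}$ nor lands in an unwanted constituent of the target composition series. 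This requires the explicit residue formulas of \cite{xkresidue} locating the zeros and leading Laurent coefficients of the holomorphic family, combined with a $(K,K')$-type analysis to pin down the image on the cohomological subquotients.
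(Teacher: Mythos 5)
Your outline follows exactly the strategy the paper indicates: the paper states that the proof (deferred to \cite{sbonvec}) combines the classification of symmetry breaking operators from Section~\ref{sec:CSBO} with the realization lemma (\cite[Thm.~2.6]{KKP}) presenting each $\Pi\in\widehat{G}_{\operatorname{cohom}}$ as a subrepresentation/quotient of $\mathcal{E}^i(S^n)_{0,\delta}$ at $\lambda=0$, and you identify precisely these ingredients (the localness theorem, the regular SBO family $\Atbb{\lambda}{\nu}{\delta\varepsilon}{i,j}$, the residue analysis of \cite{xkresidue}) and the translation $j\in\{i-1,i\}\leftrightarrow\operatorname{Index}$ via Lemma~\ref{lem:ps}. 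You also correctly single out the genuinely delicate steps --- verifying that the relevant SBO is nonzero on the cohomological subquotient and lands in the right constituent, and (for (i)$\Rightarrow$(iii)) lifting an SBO between subquotients to one between ambient principal series --- so the proposal matches the paper's approach and its acknowledged difficulties.
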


The proof uses the symmetry breaking operators 
 that are discussed in Section \ref{sec:CSBO}
 and the relationship between $\dual G {cohom}$
 and conformal representations on differential forms
 on the sphere $S^n$ summarized as below.  
\begin{lemma}
[{\cite[Thm.~2.6]{KKP}}]
If $\Pi \in \widehat G_{\operatorname{cohom}}$, 
 then $\Pi^{\infty}$ can be realized 
 as a subrepresentation
 of ${\mathcal{E}}^i(S^n)_{0,\delta}$
 with $i={\operatorname{Index}}_{G}(\Pi)$
 and $\delta =(-1)^i{\operatorname{sgn}}_G(\Pi)$
 if ${\operatorname{Index}}_{G}(\Pi) \ne n+1$, 
 and also as a quotient of ${\mathcal{E}}^i(S^n)_{0,\delta}$
 with $i={\operatorname{Index}}_{G}(\Pi)-1$
 and $\delta =(-1)^i{\operatorname{sgn}}_G(\Pi)$
 if ${\operatorname{Index}}_{G}(\Pi) \ne 0$.  
\end{lemma}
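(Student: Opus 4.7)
The plan is to translate the statement via Lemma~\ref{lem:ps}(2) into a fact about composition factors of degenerate principal series of $G=O(n+1,1)$, and then extract that fact from the $G$-equivariant de~Rham complex on $S^n$. At $\lambda=0$ and $\delta=(-1)^i\varepsilon$ the isomorphism of Lemma~\ref{lem:ps}(2) reads $\mathcal{E}^i(S^n)_{0,(-1)^i\varepsilon}\simeq I_\varepsilon(i,i)$. Writing $\ell=\operatorname{Index}_G(\Pi)$ and $\varepsilon=\operatorname{sgn}_G(\Pi)$, the lemma is equivalent to the assertion that $\Pi^\infty$ embeds in $I_\varepsilon(\ell,\ell)$ when $\ell\le n$ and appears as a quotient of $I_\varepsilon(\ell-1,\ell-1)$ when $\ell\ge 1$; both amount to identifying a distinguished composition factor of the reducible principal series $I_{\varepsilon'}(i,i)$.

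A direct computation from \eqref{eqn:varpi} shows that, at $\lambda=0$, the exterior differential $d$ intertwines $\varpi^{(i)}_{0,\delta}$ and $\varpi^{(i+1)}_{0,\delta}$, since it commutes with $L_{h^{-1}}^*$ and with the scalar $\mathpzc{or}(h)^\delta$. Thus, for each bundle parity $\delta$ we obtain a $G$-equivariant de~Rham complex
\[
  0\to\mathcal{E}^0(S^n)_{0,\delta}\xrightarrow{d}\mathcal{E}^1(S^n)_{0,\delta}\xrightarrow{d}\cdots\xrightarrow{d}\mathcal{E}^n(S^n)_{0,\delta}\to 0
\]
whose cohomology, computed by Hodge theory on the compact manifold $S^n$, is concentrated in degrees $0$ and $n$ and contributes a one-dimensional character occurring in Example~\ref{ex:chi}. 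Setting $B^i:=\operatorname{image}(d^{(i-1)})$, the short exact sequences $0\to B^i\to \mathcal{E}^i(S^n)_{0,\delta}\to B^{i+1}\to 0$ (with the obvious endpoint modifications) realize each $B^i$ simultaneously as a submodule of $\mathcal{E}^i(S^n)_{0,\delta}$ and as a quotient of $\mathcal{E}^{i-1}(S^n)_{0,\delta}$.

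It remains to identify each $B^i$ with an element of $\widehat G_{\operatorname{cohom}}$. By Lemma~\ref{lem:ps}(1), $I_{\varepsilon'}(i,i)$ is reducible at $\nu=i$, and a $K$-type count using the branching rule $\Lambda^i(\mathbb{C}^n)\downarrow O(n)\times O(1)$ shows that its composition length is exactly two for $1\le i\le n-1$; moreover the infinitesimal character of $I_{\varepsilon'}(i,i)$ agrees with that of the trivial representation. The Vogan--Zuckerman classification then forces $B^i$ to lie in the list $\{\Pi_{\ell',\varepsilon''}^\infty\}$, and a comparison of lowest $K$-types pins down $\ell'=i$. The sign $\varepsilon''$ is determined by the action of an orientation-reversing element of $O(n+1,1)$ on the lowest $K$-type, matched against the convention introduced after Example~\ref{ex:tempered}. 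Running this identification in the two de~Rham complexes at bundle parities $(-1)^\ell\varepsilon$ and $(-1)^{\ell-1}\varepsilon$ yields the sub and quotient statements; the endpoint cases $\ell\in\{0,n+1\}$ are handled by the one-dimensional representations of Example~\ref{ex:chi} together with the Hodge $\ast$-duality $\mathcal{E}^i(S^n)_{\lambda,\delta}\leftrightarrow \mathcal{E}^{n-i}(S^n)_{\lambda',\delta'}$.

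The main obstacle is the length-two claim together with the sign bookkeeping: one must verify that no additional composition factors appear in $I_{\varepsilon'}(i,i)$, and that the labels assigned to $B^i$ on the two sides of each short exact sequence fit consistently with the abstract classification. Both steps ultimately reduce to explicit $K$-type calculations for $O(n+1,1)$, but the careful matching of $\varepsilon''$ across the two bundle parities $(-1)^\ell\varepsilon$ and $(-1)^{\ell-1}\varepsilon$ is the delicate point where a sign can easily be misplaced.
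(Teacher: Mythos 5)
The lemma in this paper is quoted from \cite[Thm.~2.6]{KKP}; the paper itself offers no proof, so I am judging your blind argument on its own merits. Your translation via Lemma~\ref{lem:ps}(2) and the observation that $d$ is $G$-equivariant at $\lambda=0$ are both correct, and the de~Rham/Hodge framework does produce $G$-equivariant short exact sequences $0\to B^\ell\to\mathcal{E}^\ell(S^n)_{0,\delta}\to B^{\ell+1}\to 0$. However, there is a genuine gap precisely at the sign bookkeeping you flag as delicate, and it is not merely a matter of care: as written, the de~Rham mechanism alone does not yield the lemma's parity assignments. Within a single complex at fixed $\delta$, the map $d\colon\mathcal{E}^{\ell-1}(S^n)_{0,\delta}\to\mathcal{E}^{\ell}(S^n)_{0,\delta}$ translates under Lemma~\ref{lem:ps}(2) into $I_{(-1)^{\ell-1}\delta}(\ell-1,\ell-1)\to I_{(-1)^\ell\delta}(\ell,\ell)$, so the $\varepsilon$-parameter \emph{flips}. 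Thus the quotient of $I_{\varepsilon_0}(\ell-1,\ell-1)$ obtained from one complex is the submodule of $I_{-\varepsilon_0}(\ell,\ell)$, whereas the lemma asserts that $\Pi_{\ell,\varepsilon}^\infty$ is simultaneously a sub of $I_\varepsilon(\ell,\ell)$ and a quotient of $I_\varepsilon(\ell-1,\ell-1)$ with the \emph{same} $\varepsilon$.

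Your remedy — ``running this identification in the two de~Rham complexes at bundle parities $(-1)^\ell\varepsilon$ and $(-1)^{\ell-1}\varepsilon$'' — does not automatically close this gap, because $B^\ell(\delta)$ and $B^\ell(-\delta)$ are the \emph{same subspace} of $\mathcal{E}^\ell(S^n)$ but carry $G$-module structures differing by a twist with the (nontrivial) character $\mathpzc{or}$ of $G$. Declaring both of them to be $\Pi_{\ell,\varepsilon}^\infty$ amounts to the nontrivial assertion $\Pi_{\ell,\varepsilon}\otimes\mathpzc{or}\simeq\Pi_{\ell,\varepsilon}$, which you never establish; if it fails, your two complexes produce $\Pi_{\ell,\varepsilon}$ and $\Pi_{\ell,-\varepsilon}$ rather than the same module twice. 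Settling this requires either a direct computation showing which character of $\pi_0(G)$ acts trivially on the relevant composition factors, or else a second intertwining map into $\mathcal{E}^\ell(S^n)_{0,(-1)^\ell\varepsilon}$ independent of $d$ (e.g., exploiting $d^*$ or the Knapp--Stein operator in the picture of Section~\ref{sec:CSBO}) whose image realizes the quotient directly from $I_\varepsilon(\ell-1,\ell-1)$. Separately, the claim that $I_{\varepsilon'}(i,i)$ has composition length exactly two for all $1\le i\le n-1$ is asserted by analogy but not checked; at the boundary degrees and for even-$n$ middle degrees the length can be larger, and your argument should at minimum account for how $B^\ell$ interacts with the harmonic summand when $\ell\in\{0,n\}$.
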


To end this section,
 we consider a tower of subgroups of a reductive group $G$:
\[
\{e\}= G^{(0)} \subset G^{(1)} \subset \cdots \subset G^{(n)} \subset G^{(n+1)}=G.  
\]
Accordingly,
 there is a family of homogeneous spaces with $G$-equivariant quotient maps:
\[
   G=G/G^{(0)} \to G/G^{(1)} \to \cdots \to G/G^{(n+1)} = \{{\operatorname{pt}}\}.  
\]
In turn,
 we have natural inclusions of $G$-modules:
\[
  C^{\infty}(G) = C^{\infty}(G/G^{(0)}) \supset C^{\infty}(G/G^{(1)}) \supset 
  \cdots \supset C^{\infty}(G/G^{(n+1)}) ={\mathbb{C}}.  
\]
A general question is:
\begin{problem}
Let $\Pi \in \widehat G_{\operatorname{smooth}}$.  
Find $k$ as large as possible 
 such that $\Pi$ is $G^{(k)}$-distinguished, 
 or equivalently,
such that the smooth representation $\Pi^{\infty}$ can be realized in $C^{\infty}(G/G^{(k)})$.  
\end{problem}

Any irreducible admissible smooth representation of $G$
 can be realized in the regular representation
 on $C^{\infty}(G/G^{(0)}) \simeq C^{\infty}(G)$
 via matrix coefficients,
 whereas irreducible representations
 that can be realized in $C^{\infty}(G/G^{(0)})={\mathbb{C}}$
 is the trivial one-dimensional representation ${\bf{1}}$.

Suppose that $G=O(n+1,1)$, 
 and consider a chain of subgroups of $G$ by
\[
   G^{(k)}:=O(k,1)
\quad
   (0 \le k \le n+1).  
\]
Then $G^{(n+1)}=G$, 
 however, 
 $G^{(0)}$ is not exactly $\{e\}$ 
 but $G^{(0)} =O(1)$ is a finite group of order two.  
Accordingly,
 we consider $\Pi \in \dual G {cohom}$
 with ${\operatorname{sgn}}(\Pi)=+$ below.  
\begin{theorem}
\label{thm:maxperiod}
Suppose $\Pi \in \widehat G_{\operatorname{cohom}}$
 with ${\operatorname{sgn}}(\Pi)=+$.    
Then 
\[
 {\operatorname{Hom}}_G(\Pi^{\infty}, C^{\infty}(G/G^{(k)})) \ne \{0\}
\quad
\text{for all $k \le n+1-{\operatorname{Index}}_G(\Pi)$.}
\]
\end{theorem}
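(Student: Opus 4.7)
The plan is to iterate the SBO existence theorem for cohomological representations (stated above, from \cite{sbonvec}) along the tower $G = G^{(n+1)} \supset G^{(n)} \supset \cdots \supset G^{(k)}$, and then recover the period by Frobenius reciprocity. The hypothesis $k\le n+1-\operatorname{Index}_G(\Pi)$ is precisely what allows the cohomological index to decrease by one at each of the $n+1-k$ steps and still reach $0$ at the bottom; the hypothesis $\operatorname{sgn}(\Pi)=+$ is needed so that the sign along the chain stays compatible with the trivial representation $\mathbf{1}$ of $G^{(k)}$, which itself has sign $+$.

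Set $i := \operatorname{Index}_G(\Pi)$ and $i_j := \max(0,\, i-(n+1-j))$ for $k\le j\le n+1$, so that $i_{n+1}=i$, $i_k=0$, and $i_{j-1}\in\{i_j, i_j-1\}$. For each $j$, let $\pi_j \in \widehat{G^{(j)}}_{\operatorname{cohom}}$ be the cohomological representation of $G^{(j)}=O(j,1)$ with index $i_j$ and sign $+$; existence at every level follows from the Vogan--Zuckerman parameter list. Then $\pi_{n+1}=\Pi$ and $\pi_k=\mathbf{1}$. At each successive link the pair $(G^{(j)}, G^{(j-1)})$ has the form $(O(j,1), O(j-1,1))$, and condition (iii) of the cited SBO existence theorem is satisfied for $(\pi_j,\pi_{j-1})$ by construction. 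Hence there is a nonzero continuous $G^{(j-1)}$-homomorphism
\[
T_j \colon \pi_j^\infty|_{G^{(j-1)}} \longrightarrow \pi_{j-1}^\infty.
\]
Composing them yields $T := T_{k+1}\circ T_{k+2}\circ\cdots\circ T_{n+1} \colon \Pi^\infty \to \mathbf{1}$, a continuous $G^{(k)}$-intertwiner.

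I next verify $T\ne 0$ by downward induction on the length of the partial composition. At each stage, the partial composition into $\pi_{j-1}^\infty$ is $G^{(j-1)}$-equivariant and (inductively) nonzero; by the Casselman--Wallach equivalence of categories, its $K^{(j-1)}$-finite image is a nonzero, hence full, submodule of the irreducible Harish-Chandra module $\pi_{j-1, K^{(j-1)}}$, so the image is dense in $\pi_{j-1}^\infty$. The next operator $T_{j-1}$ is continuous with closed proper kernel (it is nonzero), and a closed proper subspace cannot contain a dense subspace; hence the composition through $T_{j-1}$ is again nonzero, and the induction proceeds. At $j=k$ the image of $T$ is a nonzero (equivalently, dense) subspace of $\mathbf{1}\simeq\mathbb{C}$, so $T\ne 0$. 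Frobenius reciprocity then gives
\[
0\neq T \in \operatorname{Hom}_{G^{(k)}}(\Pi^\infty|_{G^{(k)}}, \mathbf{1}) \simeq \operatorname{Hom}_G(\Pi^\infty, C^\infty(G/G^{(k)})),
\]
which is the conclusion of the theorem.

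The substantive content is imported from the SBO existence theorem of \cite{sbonvec}: its proof rests on the classification of Section \ref{sec:CSBO} together with the explicit regular families $\Atbb{\lambda}{\nu}{\delta\varepsilon}{i,j}$ and the determination of their zero sets. Given that input, the iteration above is uniform, and the nonvanishing step is essentially formal, provided one uses \emph{continuous} SBOs so that Casselman--Wallach applies. The only intrinsic obstacle lies at the bottom of the tower, where $G^{(j)}$ becomes low-dimensional --- in particular at $k=0$ (if that case falls in range), where $G^{(0)}=O(1)$ is finite and the reductive SBO framework does not literally apply. That final link must be handled by a direct computation, which is however elementary since the target is the trivial representation of a finite group of order two.
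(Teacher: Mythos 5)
The paper itself does not give a proof of Theorem~\ref{thm:maxperiod}; it defers all proofs of Section~\ref{sec:period} to the forthcoming paper \cite{sbonvec}. So there is no in-paper argument to compare against. Judged on its own, your proposal is the natural one given the framework set up here: iterate the cited SBO existence theorem (whose condition (iii) you check correctly at every link, since $i_{j-1}\in\{i_j,i_j-1\}$ by construction), verify nonvanishing of the composite by a density-and-closed-kernel argument, and pass from $\operatorname{Hom}_{G^{(k)}}(\Pi^\infty|_{G^{(k)}},\mathbf 1)$ to $\operatorname{Hom}_G(\Pi^\infty,C^\infty(G/G^{(k)}))$ by Frobenius reciprocity exactly as in Definition~\ref{def:7.1}. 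The nonvanishing step is correct (the closure of the image of a nonzero continuous intertwiner into an irreducible Fr\'echet module is the whole module, and a nonzero continuous operator has a proper closed kernel which cannot contain a dense subspace); you do not actually need Casselman--Wallach for that, just irreducibility of the target.

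Two remarks. First, you can sidestep almost all of the low-rank worry you flag at the end: since $i_j=0$ for every $j\le n+1-\operatorname{Index}_G(\Pi)$, the tower already delivers a nonzero $G^{(n+1-i)}$-invariant functional on $\Pi^\infty$, and any $G^{(k)}$ with $k\le n+1-i$ is a subgroup of $G^{(n+1-i)}$, so the same functional is automatically $G^{(k)}$-invariant. Thus you never need the SBO theorem below rank $n+1-i$; the only genuinely degenerate links occur when $\operatorname{Index}_G(\Pi)\in\{n,n+1\}$, where $G^{(n+1-i)}$ is $O(1,1)$ or $O(1)$, and there $\Pi$ is (close to) a character, so the invariant-vector statement is an elementary check as you note. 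Second, the hypothesis $\operatorname{sgn}(\Pi)=+$ enters precisely where you place it: the bottom of the chain must be the trivial character of $G^{(k)}$, which has $\operatorname{sgn}=+$, and condition (iii) of the cited theorem forces the sign to be constant along the chain.

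In short, the proposal is correct modulo the low-rank edge cases you already flag, and those can be contained as above. Whether it coincides with the argument in \cite{sbonvec} cannot be verified from this survey, but it is fully consistent with the machinery the paper sets up and is plausibly the intended route.
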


\begin{example}
[one-dimensional representations]
\label{ex:tiny}
Suppose $\Pi \in \dual G {cohom}$
 with ${\operatorname{sgn}}_G(\Pi)=+$.  
We consider two opposite extremal cases,
 {\it{i.e.}}, 
 ${\operatorname{Index}}_G(\Pi)=0$
 and $=n+1$.  
If ${\operatorname{Index}}_G(\Pi)=0$, 
 then $\Pi$ is isomorphic to the trivial one-dimensional representation ${\bf{1}}$, 
 and can be realized in $C^{\infty}(G/G^{(k)})$
 for all $0 \le k \le n+1$
 as in Theorem \ref{thm:maxperiod}.  
On the other hand, 
 if ${\operatorname{Index}}_G(\Pi)=n+1$, 
 then $\Pi$ is another one-dimensional representation of $G$
 ($\Pi_{n+1,+} \simeq \chi_{-+}$ with the notation \cite[(2.9)]{KKP}).  
In this case, 
 $\Pi$ can be realized in $C^{\infty}(G/G^{(k)})$
 iff $k=0$, 
namely,
 iff $G^{(k)}=O(1)$.  
\end{example}

\begin{remark}
The size of an (infinite-dimensional) representation could be measured by
its Gelfand--Kirillov dimension, or more precisely, by its associated variety
or by the partial flag variety for which its localization can be realized
as a $\mathcal D$-module.
Then one might expect the following assertion: 
\begin{multline}
\label{eqn:smallrep}
\text{{\sl{the larger the isotropy subgroup 
 $G^{(k)}$ is 
 ({\it{i.e.,}} the larger $k$ is),}}}
\\
\text{{\sl{the \lq\lq{smaller}\rq\rq\ irreducible subrepresentations
 of $C^{\infty}(G/G^{(k)})$ become.
}}}
\end{multline}

This is reflected partially in Theorem \ref{thm:maxperiod}, 
 however, 
 Theorem \ref{thm:maxperiod} asserts even sharper results.  
To see this, 
 we set
\[
 r:= \min({\operatorname{Index}}_G(\Pi), n+1-{\operatorname{Index}}_G(\Pi)).  
\]
Then the underlying $({\mathfrak{g}},K)$-module $\Pi_K$
 can be expressed as a cohomological parabolic induction from a $\theta$-stable parabolic subalgebra ${\mathfrak {q}}_r$
 with Levi subgroup $N_G({\mathfrak {q}}_r) \simeq SO(2)^r \times O(n+1-2r,1)$
 (\cite{KV}, see also \cite[Thm.~3]{KMemoirs92}).  
Theorem \ref{thm:maxperiod} tells
 that if $n+1 \le 2k$, 
 then the larger $k$ is, 
 the smaller $r = {\operatorname{Index}}_G(\Pi)$ becomes,
 namely,
 the smaller the $({\mathfrak{g}},K)$-modules
 that are cohomologically parabolic induced modules from ${\mathfrak {q}}_r$ become.  
This matches \eqref{eqn:smallrep}.  
On the other hand, 
 if $2k \le n+1$, 
 then the constraints in Theorem~\ref{thm:maxperiod}
 provide an interesting phenomenon which is opposite to \eqref{eqn:smallrep}
 because $r=n+1-{\operatorname{Index}}_G(\Pi)$, 
 and thus suggest sharper estimates
 than \eqref{eqn:smallrep}.  
For instance,
 the representation $\Pi_{n+1,+} (\simeq \chi_{-+})$
 is \lq\lq{small}\rq\rq\
 because it is one-dimensional, 
 but it can be realized in $C^{\infty}(G/G^{(k)})$
 only for $k=0$ 
 as we saw in Example \ref{ex:tiny}.  
\end{remark}

\begin{remark}[comparison with $L^2$-theory]
\label{rem:temp}
Theorem \ref{thm:maxperiod} implies
 that the smooth representation $\Pi^{\infty}$
 of a tempered representation $\Pi$
with nonzero $(\mathfrak g, K)$-cohomologies
 (see Example \ref{ex:tempered})
 occurs in $C^{\infty}(G/G^{(k)})$
 if $k \le \frac n 2+1$.  
On the other hand,
 for a reductive homogeneous space $G/H$, 
 a general criterion for the unitary representation $L^2(G/H)$
 to be tempered was proved in a joint work \cite{BK} with Y.~Benoist
 by a geometric method.  
In particular,
the unitary representation $L^2(G/G^{(k)})$ is tempered
 if and only if $k \le \frac n 2 +1$, 
see \cite[Ex.~5.10]{BK}.  
\end{remark}

\vskip 2pc
{\bf{Acknowledgements}}:\enspace
The author was partially supported by Grant-in-Aid for Scientific
Research (A) (25247006), Japan Society for the Promotion of Science.
This article is based on the plenary lecture
 that the author delivered at the XXXVI-th workshop
 on Geometric Methods in Physics at Bialowieza in 2017.  
He expresses his gratitude to the organizers
 for their warm hospitality.

\bigskip

\end{document}